

\documentclass[11pt]{article} 
\usepackage{authblk}
\usepackage{hyperref}
\newcommand*{\email}[1]{%
    \normalsize\href{mailto:#1}{#1}\par
    }
    
\usepackage[utf8]{inputenc} 
\usepackage{amssymb}
\usepackage{amsfonts}
\usepackage{amsmath}
\usepackage{mathtools}


\usepackage{geometry} 
\geometry{a4paper} 

\usepackage{graphicx} 


\usepackage{booktabs} 
\usepackage{array} 
\usepackage{paralist} 
\usepackage{verbatim} 
\usepackage{subfig} 

\usepackage{fancyhdr} 
\pagestyle{fancy} 
\lhead{}\chead{}\rhead{}
\lfoot{}\cfoot{\thepage}\rfoot{}

\usepackage{sectsty}
\allsectionsfont{\sffamily\mdseries\upshape} 

\usepackage[nottoc,notlof,notlot]{tocbibind} 
\usepackage[titles,subfigure]{tocloft} 



\newtheorem{theorem}{Theorem}

\newtheorem{definition}[theorem]{Definition}

\newtheorem{lemma}[theorem]{Lemma}

\newtheorem{proposition}[theorem]{Proposition}
\newtheorem{remark}[theorem]{Remark}

\newcommand{\R}{\mathbb{R}}
\newcommand{\Z}{\mathbb{Z}}
\newcommand{\Q}{\mathbb{Q}}

\newenvironment{proof}[1][Proof]{\noindent\textbf{#1.} }{\ \rule{0.5em}{0.5em}}

\newcommand*{\transp}[2][-3mu]{\ensuremath{\mskip1mu\prescript{\smash{\mathrm t\mkern#1}}{}{\mathstrut#2}}}%

\begin{document}
\title{The Borsuk-Ulam theorem for  3-manifolds }

\author{Chahrazade Matmat}
\affil{  Département de mathématiques,
Université Fr\`eres Mentouri, Constantine 1, Algeria.
\email{c.matmat@yahoo.fr}}

\author{Christian Blanchet}
\affil{Université de Paris and Sorbonne Université, CNRS, IMJ-PRG, F-75006 Paris, France.
\email{Christian.Blanchet@imj-prg.fr}}


\maketitle
\begin{abstract}
We study the Borsuk-Ulam theorem for  triple $(M, \tau,\R^n)$, where $M$ is a compact, connected, $3$-manifold equipped with a fixed-point-free involution $\tau.$ The largest value of $n$ for which the Borsuk-Ulam theorem holds is called the $\Z_2$-index and in our case it takes value  $1$, $2$ or $3$.
 We fully discuss this index according to cohomological operations applied on
the characteristic class $x\in H^1(N, \Z_2)$, where $N=M/\tau$ is the orbit space.
 In oriented case, we obtain an expression of the index from the linking matrix of a surgery presentation of the orbit space.
We illustrate our results with examples, including a non orientable one.

\noindent
\textbf{2020 MSC} : 57K30, 57M60.\\
\textbf{Key words}: Borsuk-Ulam Theorem, 3-manifolds, surgery, linking forms.
\end{abstract}
\section*{Introduction}
Let $(X, \tau)$ be a free ${\Z}_2 $-space, which means that $X$ is a topological space and $\tau:X\rightarrow X$ is a fixed-point-free involutive homeomorphism. 
The Borsuk-Ulam theorem holds for a triple $(X, \tau, Y)$, where $Y$ is a topological space, if and only if for every continuous map $f: X\rightarrow Y$ there exists  $x\in X$ such that $f(x)=f(\tau(x))$. In this case, we say that the triple $(X, \tau, Y)$ has the Borsuk-Ulam property or $(X, \tau, Y)$ is a Borsuk-Ulam triple.
The original version of this theorem was conjectured by St. Ulam and proved by K. Borsuk  \cite{Borsuk} in $1933$ for the triple $(S^n, \tau, \R^n)$, where $\tau : S^n \rightarrow S^n$ is the antipodal map.
This result has  several generalizations and interesting applications see e.g. \cite {Jiri} .

 Keeping $X=S^n$, Corner and Floyd  \cite{Corner} proved the theorem for  $Y$  a finite dimensional differential manifold $M,$ Munkholm in \cite{Munkholm} proved that we can omit the condition of differentiability and assume that $Y$ is a compact topological m-manifold. On the other hand, Munkholm in \cite{Munkholm 1} and Minoru Nakaoka in \cite{Nakaoka} have respectively replaced the sphere $S^n$ with  closed topological (mod 2) $n$-homological spheres and (mod p) $n$-homological spheres, they assumed that $Y$ is a compact topological manifold and proved the Borsuk-Ulam theorem in these cases. The study of the Borsuk-Ulam theorem for triples $(X, \tau, \R^n)$, where $X$ is a low dimensional manifold is an interesting problem. Daciberg Lima Gonçalves  \cite{Daciberg 2} fully studied the case $(S, \tau, \R^2)$, where $S$ is a closed surface.

Our goal in this paper is to discuss the  Borsuk-Ulam index
 in the case where $M$ is a compact connected $3$-dimensional manifold. 
The result is known for certain families of $3$-manifolds : double covers of Seifert manifolds in \cite{BGHZ}, spherical manifolds in \cite{Daciberg 4}. General cohomological conditions are given in \cite{Daciberg 3}. Our purpose here is to review and reformulate these general conditions in terms of easily computable criterions. Our main original contribution concerns surgery presentations of oriented $3$-manifolds.

Before stating the results, let us introduce some notation. We denote by $N=M/\tau$ the orbit space.
Let  $x\in H^1(N, \Z_2)$ be the classifying class of  the principal $\Z_2$-bundle $M\twoheadrightarrow N$ i.e $x=\gamma^*(\alpha)$, where $\gamma:N\rightarrow \R P^\infty$  classifies the bundle  and $\alpha$ is the generator of $H^1(\R P^\infty, \Z_2)$. The class $x$ is not trivial since $M$ is connected.

Let $\beta : H^1( \ .\ , \Z_2) \longrightarrow H^2( \ .\ , \Z),$ $\beta_2 : H^1(\ . \ , \Z_2) \longrightarrow H^2(\ . , \Z_2)$ 
  be the Bockstein homomorphisms associated respectively  to the short exact sequences $0 \longrightarrow \Z \overset{\times 2}\longrightarrow \Z \longrightarrow \Z_2 \longrightarrow 0,$
  $0 \longrightarrow \Z_2 \overset{\times 2}\longrightarrow \Z_4 \longrightarrow \Z_2 \longrightarrow 0$.
The following well known theorem \cite{Daciberg 3,BGHZ} reduces the discussion to cohomological computations.

\begin{theorem} 
Let $M$ be a compact and connected $3$-dimensional manifold with fixed point free involution $\tau$, and corresponding classifying class $x\in H^1(N,\Z_2)$.
\begin{enumerate}
\item $1\leq ind_{\Z_2}(M, \tau)\leq 3$.
\item $ind_{\Z_2}(M, \tau)=1 \Longleftrightarrow \beta(x)=0 $.
\item $ind_{\Z_2}(M, \tau)= 3 \Longleftrightarrow  x^3\neq 0$.
\end{enumerate}
\label{th7}
\end{theorem}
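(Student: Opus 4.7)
My plan is to reformulate the Borsuk-Ulam property homotopy-theoretically and then apply obstruction theory on $N$. Specifically, I would use that $(M,\tau,\R^n)$ satisfies BU iff no $\Z_2$-equivariant map $M\to S^{n-1}$ (antipodal action) exists, iff the classifying map $\gamma:N\to\R P^\infty$ fails to factor up to homotopy through the inclusion $\R P^{n-1}\hookrightarrow\R P^\infty$. With this translation, $ind_{\Z_2}(M,\tau)$ becomes the largest $n$ such that no such factorization exists, and each part of the theorem becomes an obstruction-theoretic question on the $3$-complex $N$.

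For (1), the lower bound $ind_{\Z_2}\geq 1$ is immediate: a factorization through $\R P^0=\{\mathrm{pt}\}$ would force $x=\gamma^*(\alpha)=0$, contradicting the nontriviality of the cover $M\to N$ (since $M$ is connected). For the upper bound, I would show that $\gamma$ always factors through $\R P^3$ by studying the cofibre $\R P^\infty/\R P^3$, which is $3$-connected; the primary and higher obstructions to nullhomotoping $N\to\R P^\infty/\R P^3$ live in $H^k(N;\pi_k(\cdots))$ for $k\geq 4$, all of which vanish since $\dim N=3$.

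For (2), I would observe that factoring $\gamma$ through $\R P^1=S^1=K(\Z,1)$ is equivalent, via the Eilenberg--MacLane interpretation of cohomology, to lifting $x\in H^1(N,\Z_2)$ to an integral class in $H^1(N,\Z)$. By the Bockstein long exact sequence of $0\to\Z\to\Z\to\Z_2\to 0$, such a lift exists iff $\beta(x)=0$. Combined with the lower bound of (1), this yields $ind_{\Z_2}(M,\tau)=1$ iff $\beta(x)=0$.

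For (3), I would set $C:=\R P^\infty/\R P^2$ and analyze the composite $\pi\circ\gamma:N\to\R P^\infty\to C$. A cellular chain computation on the standard CW structure of $\R P^\infty$ would give $H_3(C;\Z)\cong\Z_2$ and $H_4(C;\Z)=0$, and $C$ is $2$-connected, so by Hurewicz $\pi_3(C)\cong\Z_2$. The long exact cohomology sequence of the pair $(\R P^\infty,\R P^2)$ in $\Z_2$-coefficients, using $H^3(\R P^2;\Z_2)=0$, would identify $\pi^*:H^3(C;\Z_2)\to H^3(\R P^\infty;\Z_2)$ as the isomorphism sending the fundamental class $\iota$ to $\alpha^3$. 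By obstruction theory on $N$, the map $\pi\circ\gamma$ is then nullhomotopic iff its unique (primary) obstruction $\gamma^*(\pi^*\iota)=x^3$ vanishes; no higher obstructions arise because $\dim N=3$. Together with (1), this gives $ind_{\Z_2}(M,\tau)=3$ iff $x^3\neq 0$. The hard part will be part (3), specifically the identification of the primary obstruction with $x^3$: tracking the Hurewicz generator of $\pi_3(C)$, checking that the fundamental class of $C$ pulls back to $\alpha^3$ under $\pi$, and ruling out higher obstructions. The other two parts are more formal, (1) relying only on the connectivity of the cofibre and (2) being a direct translation through the Bockstein sequence.
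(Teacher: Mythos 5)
Your parts (1) and (2) are essentially correct and amount to reproving, via obstruction theory and the Eilenberg--MacLane description of $H^1$, the statements the paper simply imports (Corollary \ref{2}, Proposition \ref{1} and Theorem \ref{th5} of \cite{Daciberg 3}); for the upper bound in (1) you do not even need the cofibre, since $(\R P^\infty,\R P^3)$ is $3$-connected and $N$ is a $3$-complex, so $\gamma$ compresses by relative obstruction theory directly. The genuine gap is in part (3), at the step where you pass from ``$\pi\circ\gamma\colon N\to C=\R P^\infty/\R P^2$ is nullhomotopic'' back to ``$\gamma$ compresses to $\R P^2$''. Only the forward implication is formal. The obstruction to compressing $\gamma$ lives in $H^3\bigl(N;\pi_3(\R P^\infty,\R P^2)\bigr)$, and $\pi_3(\R P^\infty,\R P^2)\cong\pi_2(\R P^2)\cong\Z$ with the \emph{nontrivial} $\pi_1(\R P^2)$-action, whereas your class $x^3$ lives in $H^3(N;\pi_3(C))=H^3(N;\Z_2)$. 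Blakers--Massey only gives that $\pi_3(\R P^\infty,\R P^2)\to\pi_3(C)$ is the surjection $\Z\to\Z_2$, one degree short of an isomorphism, so $x^3=0$ only says that the mod $2$ reduction of the true (twisted integral) compression obstruction vanishes, not the obstruction itself.

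This is precisely where the hypothesis that $N$ is a closed $3$-manifold, rather than an arbitrary $3$-complex, has to enter (note that the paper's Theorem \ref{th6} is stated for manifolds while Theorem \ref{th5} is not). To close the gap you should work with the twisted class $o(\gamma)=\gamma^*(e)\in H^3(N;\Z_w)$, where $e$ generates $H^3(\R P^\infty;\Z_w)\cong\Z_2$ and $\Z_w$ is $\Z$ twisted by $x$: this class is $2$-torsion, its mod $2$ reduction is $x^3$, and Poincar\'e duality for the closed $3$-manifold $N$ forces $H^3(N;\Z_w)\cong\Z$ or $\Z_2$, so that it has no $4$-torsion and the Bockstein sequence for $0\to\Z_w\to\Z_w\to\Z_2\to 0$ shows $o(\gamma)=0$ if and only if $x^3=0$. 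The paper itself sidesteps all of this: its proof of Theorem \ref{th7} is a short assembly of Corollary \ref{2}, Proposition \ref{1}, Theorem \ref{th5} and Theorem \ref{th6}, quoting the last (the hard one) from \cite{Daciberg 3} without proof. Your proposal is more self-contained, but the one nontrivial ingredient is exactly the one left unjustified.
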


 The triple cup used in the third equivalence can be tricky to compute in examples.
 We will reformulate this criterion. The class $x\in H^1(N,\Z_2)$ can be used  to define homology and cohomology with twisted coefficients. 

 We use the notation $\Lambda^-$ for the coefficient ring $\Lambda$ when the homology or cohomology is twisted with the representation
 $\phi: \pi_1(N) \rightarrow \mathrm{Aut}(\Lambda)$, where
 $\phi([\gamma])(\lambda)=(-1)^{<x,\gamma>}\lambda$. 
  Let  $\beta_2^- : H^2(\ . \ , \Z_2) \longrightarrow H^3(\ . \ , \Z_2)$ be the Bockstein homomorphism associated
 to the short exact sequence $0 \longrightarrow \Z_2 \overset{\times 2}\longrightarrow \Z_4^- \longrightarrow \Z_2 \longrightarrow 0$.

\begin{theorem} \label{th_reformulate3}Let $M$ be a compact and  connected  $3$-dimensional manifold with fixed point free involution $\tau$, and classifying class $x\in H^1(N,\Z_2)$.

Let  $\bar x\in H_1 (N, \Z_2)$ be the Poincaré dual of $\beta_2 (x)$. Then the following are equivalent
\begin{enumerate}[(i)]
\item  $ind_{\Z_2}(M, \tau)= 3$,
\item  $\langle x,\bar x\rangle \neq 0$,
\item $(\beta_2^-\circ \beta_2)(x)\neq 0$.
\end{enumerate}
\end{theorem}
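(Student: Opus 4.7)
The plan is to invoke part (3) of Theorem \ref{th7}, which already identifies $ind_{\Z_2}(M,\tau)=3$ with the non‑vanishing of the triple cup product $x^3\in H^3(N,\Z_2)$, and then to rewrite that triple cup product as the asserted Kronecker pairing by combining a standard Steenrod square identity with Poincaré duality.

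The first step is to replace one factor of $x\cup x$ by a Bockstein. The mod $2$ Steenrod operation satisfies $Sq^1=\beta_2$ universally, and the unstable axiom $Sq^i(u)=u^2$ when $i=\deg u$ applied to the one‑dimensional class $x$ gives
\[
\beta_2(x)\;=\;Sq^1(x)\;=\;x\cup x.
\]
Hence $x^3=x\cup(x\cup x)=x\cup\beta_2(x)$ in $H^3(N,\Z_2)$.

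The second step is Poincaré duality with $\Z_2$ coefficients. Because $\tau$ is fixed‑point‑free, $N=M/\tau$ is a closed connected $3$-manifold, so the $\Z_2$-fundamental class $[N]\in H_3(N,\Z_2)$ exists regardless of orientability. By definition $\bar x=\beta_2(x)\cap[N]$, and the standard cap/cup adjunction yields
\[
\langle x,\bar x\rangle \;=\; \bigl\langle x,\beta_2(x)\cap[N]\bigr\rangle \;=\; \bigl\langle x\cup\beta_2(x),[N]\bigr\rangle \;=\; \langle x^3,[N]\rangle.
\]
Since the evaluation map $H^3(N,\Z_2)\to\Z_2$, $y\mapsto\langle y,[N]\rangle$, is an isomorphism, $x^3\neq 0$ is equivalent to $\langle x^3,[N]\rangle\neq 0$, which by the chain above is equivalent to $\langle x,\bar x\rangle\neq 0$. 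Combined with Theorem \ref{th7}(3) this is exactly the claimed reformulation.

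There is essentially no hard step here: the argument amounts to the identity $\beta_2=Sq^1$ on mod $2$ cohomology followed by Poincaré duality. The only point worth flagging is the use of the $\Z_2$-fundamental class of $N$, which is legitimate precisely because the free action makes $N$ a closed $3$-manifold; if one wished to allow $M$ with boundary one would have to replace Poincaré duality by Poincaré–Lefschetz and reinterpret $\bar x$ in $H_1(N,\partial N,\Z_2)$, but the algebraic core of the proof is unchanged.
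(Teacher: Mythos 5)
Your argument is correct and follows essentially the same route as the paper: the key identity $x\smallsmile x=Sq^1(x)=\beta_2(x)$ is exactly the paper's Lemma \ref{lemma 7}, and the rest is the cap/cup adjunction with the $\Z_2$-fundamental class, which the paper leaves implicit but you spell out. No issues.
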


In oriented case, a formula from Turaev \cite{Turaev} gives the triple cup in terms of the linking pairing, which implies an easier criterion stated in Theorem \ref{th_linking}.
Based on this formula, our  main contribution is a general result for oriented $3$-manifolds using a surgery presentation of the orbit space. Recall that any oriented compact $3$-manifold can be given such a surgery presentation.
\begin{theorem} 
Let $(M,\tau)$ be a compact oriented $3$-manifold with oriented free involution $\tau$. Suppose that the quotient $M/\tau$
is homeomorphic to $N_{\mathcal{L}}=\partial W_{\mathcal{L}}$, the result of surgery along an $m$ components framed link $\mathcal{L}$ in $3$-sphere, with associated linking matrix $B_\mathcal{L}$. Let \mbox{$x\in H^1(N_{\mathcal{L}},\Z_2)\cong\ker(B_{\mathcal{L}}\otimes \Z_2)$} be the classifying class, and
let \mbox{$X\in H^2(W_\mathcal{L},N_\mathcal{L},\Z)\cong \Z^m$} be an integral lift of the coboundary $\delta(x)\in H^2(W_{\mathcal{L}},N_\mathcal{L},\Z_2)\cong \Z_2^m$, then we have:
\begin{enumerate}
\item 
$ind_{\Z_2}(M, \tau)=1$ if and only if $\frac{1}{2}B_\mathcal{L}.X$  vanishes in $ \mathrm{coker}(B_\mathcal{L})\approx H^2(N_\mathcal{L},\Z)$.
\item $ind_{\Z_2}(M, \tau)=3 \Longleftrightarrow \frac{1}{2}\transp{X}.B_\mathcal{L}.X\neq 0 \mathrm{\ mod\ }2$.
\end{enumerate}
\label{th8}
\end{theorem}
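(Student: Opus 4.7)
The plan is to deduce both equivalences from Theorems \ref{th7} and \ref{th2} by re-expressing $\beta(x)\in H^2(N,\Z)$ and the linking value $\mathcal L_N(\widetilde x,\widetilde x)\in\Q/\Z$ in terms of the lift $X$ and the linking matrix $B_{\mathcal L}$. Using Poincar\'e-Lefschetz duality I identify $H_2(W_{\mathcal L},\Z)$ with $H^2(W_{\mathcal L},N,\Z)$, so that $X$ can be viewed as a relative cohomology class and $j^*(X)=B_{\mathcal L} X\in H^2(W_{\mathcal L},\Z)$. Its mod $2$ reduction equals $j^*(\delta^*(x))$, which vanishes by exactness of \eqref{sq(W_1)}; hence $B_{\mathcal L} X$ is even and $\tfrac12 B_{\mathcal L} X\in H^2(W_{\mathcal L},\Z)$ makes sense.

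For part 1, Theorem \ref{th7}(2) reduces the statement to the equivalence $\beta(x)=0 \Leftrightarrow i^*(\tfrac12 B_{\mathcal L} X)=0$, and I plan to establish the sharper identity $\beta(x) = -\,i^*(\tfrac12 B_{\mathcal L} X)$ by a cochain-level chase. Pick a $\Z_2$-cocycle $c_x$ representing $x$, an integer lift $c\in C^1(N,\Z)$, and an extension $\widetilde c\in C^1(W_{\mathcal L},\Z)$. The integer cocycle $\xi:=\delta\widetilde c\in Z^2(W_{\mathcal L},\Z)$ is a coboundary, so $[\xi]=0$ in $H^2(W_{\mathcal L},\Z)$; its restriction to $N$ equals $\delta c$, a cochain divisible by $2$ with $\tfrac12\delta c$ representing $\beta(x)$. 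Since $\xi\bmod 2$ is a relative $\Z_2$-cocycle representing $\delta^*(x)$, after adjusting by an integer relative coboundary we may choose an integer relative cocycle $\xi'\in Z^2(W_{\mathcal L},N,\Z)$ representing $X$ with $\xi'-\xi\in 2\,C^2(W_{\mathcal L},\Z)$. Writing $\xi'-\xi=2\eta$, the cochain $\eta$ is automatically a cocycle and satisfies $\eta|_N=-\tfrac12\delta c$, representing $-\beta(x)$; passing to cohomology yields $2[\eta]=[\xi']-[\xi]=j^*(X)=B_{\mathcal L} X$, so $[\eta]=\tfrac12 B_{\mathcal L} X$ in the torsion-free group $H^2(W_{\mathcal L},\Z)$ and therefore $i^*(\tfrac12 B_{\mathcal L} X)=[\eta|_N]=-\beta(x)$.

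For part 2, Theorem \ref{th2} reduces the claim to proving $\mathcal L_N(\widetilde x,\widetilde x)\equiv -\tfrac14\,\transp{X} B_{\mathcal L} X \pmod{\Z}$, since this is nonzero in $\Q/\Z$ exactly when $\tfrac12\,\transp{X} B_{\mathcal L} X$ is odd. The class $\widetilde x=\mathrm{PD}_N(\beta(x))$ is $2$-torsion, and using part 1 together with the identification of $i^*:H^2(W_{\mathcal L},\Z)\to H^2(N,\Z)$ with $\partial:H_2(W_{\mathcal L},N,\Z)\to H_1(N,\Z)$ we have $\widetilde x=-\partial W_0$ for the class $W_0\in H_2(W_{\mathcal L},N,\Z)$ corresponding to $\tfrac12 B_{\mathcal L} X$; equivalently $2W_0=j_*(X)$. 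Wall's formula for the linking pairing on the boundary of a compact oriented $4$-manifold (see e.g.\ \cite{Turaev}), applied with $n=2$ and $v=X$, then yields $\mathcal L_N(\widetilde x,\widetilde x)=-\tfrac14\,Q_{W_{\mathcal L}}(X,X)=-\tfrac14\,\transp{X} B_{\mathcal L} X\bmod \Z$, as desired.

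The main obstacle is aligning all the identifications---Poincar\'e-Lefschetz duality on $W_{\mathcal L}$, Poincar\'e duality on $N$, the natural bases of \eqref{sq(W)} and \eqref{sq(W_1)}, and the cochain-level conventions for $\delta^*$ and $\beta$---so that signs and factors of $2$ match throughout. It is also routine but worthwhile to verify that both $i^*(\tfrac12 B_{\mathcal L} X)\in H^2(N,\Z)$ and $\tfrac12\,\transp{X} B_{\mathcal L} X\bmod 2$ are independent of the choice of integer lift $X$: substituting $X\mapsto X+2Y$ changes the former by $B_{\mathcal L} Y=j^*(Y)\in\ker i^*$, and changes $\transp{X} B_{\mathcal L} X$ by $4\transp{X} B_{\mathcal L} Y + 4\transp{Y} B_{\mathcal L} Y\equiv 0\pmod 4$.
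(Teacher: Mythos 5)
Your argument is correct, and for part 2 it takes a genuinely different route from the paper. For part 1 the underlying idea is the same as the paper's (compute $\beta(x)$ from the surgery presentation by dividing $B_{\mathcal L}X$ by $2$ and pushing into $\mathrm{coker}(B_{\mathcal L})\simeq H^2(N,\Z)$), but you execute it as an explicit cochain-level zig-zag on the pair $(W_{\mathcal L},N)$, whereas the paper invokes functoriality of the Bockstein sequence together with a chain map to the small complex $A\leftarrow H^2(W,A)\xleftarrow{B_{\mathcal L}} H^2(W,N,A)\leftarrow A$; your version ends with $i^*(\tfrac12 B_{\mathcal L}X)=-\beta(x)$ rather than $+\beta(x)$, a harmless discrepancy of convention since only the vanishing matters, and your chase is arguably more self-contained than the paper's appeal to the auxiliary chain map. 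For part 2 the paper deduces the criterion from Theorem \ref{th_reformulate3}, computing the $\Z_2$-valued evaluation $\langle x,\bar x\rangle=\langle\delta^*x,D(Y)\rangle=\transp{X}Y=\tfrac12\transp{X}B_{\mathcal L}X \bmod 2$ directly, while you instead go through Theorem \ref{th2} and Wall's formula for the linking pairing on $\partial W_{\mathcal L}$, obtaining $\mathcal L_N(\widetilde x,\widetilde x)=-\tfrac14\transp{X}B_{\mathcal L}X\bmod\Z$. Both yield the same parity criterion (note $\transp{X}B_{\mathcal L}X=2\transp{X}Y$ is automatically even, so nonvanishing in $\Q/\Z$ is exactly oddness of $\tfrac12\transp{X}B_{\mathcal L}X$); your route leans on the external input of Wall's formula and on orientability (satisfied here), whereas the paper's route stays inside its own Lemma \ref{lemma 7} framework and its Theorem \ref{th2} proof already records the compatibility $\mathcal L_N(\widetilde x,\widetilde x)=\tfrac12\langle x,\bar x\rangle$ that makes the two answers agree. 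Your closing check that both expressions are independent of the choice of integral lift $X$ is a worthwhile addition that the paper omits.
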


We apply our results to a few families of example. As a warm up we first consider the double cover of lens spaces. Since lens spaces are Seifert fibrations, the result is contained (although hidden) in \cite{BGHZ}, case $(0; (o_1,0); (q,p))$ in Orlik notation. We then fully discuss double covers of mapping tori. We consider the case of surgery presentation on  algebraically split links. We finally study all free involutions on $S^1 \times S^2$, which include a non oriented one. In Proposition \ref {prop 22}, we prove the Borsuk-Ulam theorem for the $3$-Klein bottle $K^3$ with natural involution.

This paper is divided into four sections.  Section 1 is reviewing of the Borsuk-Ulam theorem and the $\Z_2$-index. 
 In Section 2 we specialise to $3$-dimensional manifolds and complete the known results, including a proof
 of Theorem \ref{th_reformulate3} stated in Introduction.
In Section 3 we prove Theorem \ref{th8} in which we give criterions for oriented $3$-manifolds given by surgery presentation.
The last Section is devoted to study examples and applications. 
\section{Review of the Borsuk-Ulam theorem}

In this Section, we start by recalling some facts and results related with the Borsuk-Ulam property.
\begin{definition}\cite{Jiri}
Let $(X_1, \tau_1)$, $(X_2, \tau_2)$ be two free $\Z_2$-spaces. A continuous map $f: X_1 \rightarrow X_2$ is said to be $\Z_2$-equivariant if it commutes with the $\Z_2$-actions i.e for all $x\in X$, we have $f(\tau_1(x))=\tau_2(f(x))$. 
\end{definition}
This can be expressed by the following commutative diagram:
\begin{equation*}
\begin{array}{ccc}
    X_1 & \overset{f} \longrightarrow&  X_2 \\
    \tau_1\downarrow &  & \downarrow \tau_2 \\
 X_1 & \overset{f} \longrightarrow & X_2 \\
  \end{array}
\end{equation*}
In this case, we write $f: X_1 \overset{\Z_2}\rightarrow X_2$.
\begin{definition}
Let $(X, \tau)$ be a free $\Z_2$-space. The $\Z_2$-index is defined as $$ind_{\Z_2}(X, \tau)=min\{n\in\{0, 1, 2,....\infty\}/ \exists f : X\overset{\Z_2}\rightarrow S^n\}$$
Here, we take the standard antipodal $\Z_2$-action on $S^n$.
\end{definition}

The $\Z_2$-index may be a natural number or $\infty$; it depends on the space $X$ and on the $\Z_2$-action $\tau$.
 The validity of the Borsuk-Ulam theorem
for $(X, \tau, \R^n)$ 
 is equivalent to the non existence of an equivariant map $f: X \rightarrow S^{n-1}$ (see e.g. \cite[Proposition 2.2]{Daciberg 3}). This is also equivalent to the non existence of a reduction to $\R P^{n-1}$ of the classifying map $X/\tau\rightarrow \R P^\infty$. This shows that the $\Z_2$-index is an homotopy problem which can be studed through obstruction theory.
 It follows that for a 
 connected finite dimensional  CW-complex  $X$  with involution $\tau$, the $\Z_2$-index  is finite and  bounded by one and the dimension of $X$.
 
The lower case is discussed in \cite[Theorems 3.1]{Daciberg 3}. Using the Bockstein exact sequence, the result can be stated as follows.
\begin{theorem}
Let $(X, \tau)$ be a CW-complex with involution and $x\in H^1(X/\tau,\Z_2)$ be
 the classifying map of the principal bundle
$X \rightarrow X/\tau$. 
 The index $ind_{\Z_2}(X, \tau)$ is equal to one if and only if $\beta(x)=0$, where $\beta : H^1(X/\tau,\Z_2)\rightarrow H^2(X/\tau,\Z)$ is the Bockstein homomorphism.
\end{theorem}

For an $m$-dimensional manifold, the upper case is also well known\cite[Theorems 3.4]{Daciberg 3}.

\begin{theorem}
Let $(M, \tau)$ be an $m$-dimensional manifold with involution and $x\in H^1(M/\tau,\Z_2)$ be
 the classifying map of the principal bundle
$M \rightarrow M/\tau$. 
 The index $ind_{\Z_2}(X, \tau)$ is equal to $m$ if and only if the $m$-fold cup product $x^m$ does not vanish.
\end{theorem} 
\section{The Borsuk-Ulam theorem for $3$-manifolds}

Here we specialize the statements of the previous section to the case where $X $ is a connected $3$-dimensional manifold with free involution $\tau$.
We immediatly obtain Theorem \ref{th7} stated in introduction.

The following lemma will allow the reformulation of the index 3 case.
\begin{lemma}Let $Y$ be a CW-complex, and $\beta_2 : H^1(Y, \Z_2) \longrightarrow H^2(Y, \Z_2)$ be the Bockstein homomorphism associated to the short exact sequence $0 \longrightarrow \Z_2 \overset{\times 2}\longrightarrow \Z_4 \longrightarrow \Z_2 \longrightarrow 0.$ \\
a) 
For every $x\in H^1(Y, \Z_2)$, we have $x\smallsmile x=\beta_2(x)$. \\
b) Let   $\beta^-_2 : H^2(Y, \Z_2) \longrightarrow H^3(Y, \Z_2)$ be the Bockstein homomorphism associated with the short exact sequence $0 \longrightarrow \Z_2 \overset{\times 2}\longrightarrow \Z_4^- \longrightarrow \Z_2 \longrightarrow 0$ for the $x$-twisted coefficients $\Z_4^-$. Then
for  $x\in H^1(Y, \Z_2)$, we have $x\smallsmile x \smallsmile x=(\beta_2^-\circ \beta_2)(x)$.
\label{lemma 7}
\end{lemma}

\begin{proof}

The cohomology class $x\in H^1(Y, \Z_2)$ can be represented by a map
$f: Y\rightarrow \R P^\infty$, which means $x=f^*(\alpha)$ where $\alpha$ is the generator of $H^1(\R P^\infty, \Z_2)$.
By functoriality, it is enough to prove the formulas for the generator $\alpha$. Recall that $\alpha$ freely generates the ring $H^*(\R P^\infty,\Z_2)$. The standard cellular structure for $\R P^\infty$ has one cell $e_k$ in each dimension $k$, with boundary $\partial e_{2j}=2e_{2j-1}$,
$\partial e_{2j+1}=0$. The coefficient map $\Z_2\overset{\times 2}\rightarrow \Z_4$ induces an isomorphism  $H^1(\R P^\infty,\Z_2)\overset{\sim}\rightarrow H^1(\R P^\infty,\Z_4)=\Z_2$. From the Bockstein exact sequence, we get that $\beta_2: \Z_2=H^1(\R P^\infty,\Z_2)\rightarrow  H^2(\R P^\infty,\Z_2)=\Z_2$ is an isomorphism, which proves a).

For proving b), we need to understand the cohomology of $\R P^\infty$ with $\Z_2$-twisted coefficients. For this purpose, we use a deck equivariant cell decomposition of the universal cover $S^\infty$ with two cells in each dimension $k$, $e_k'$ and $e_k''$. The deck transformation is $-id_{S^\infty}$. It respects orientation on odd dimensional cells and changes the orientation on even dimensional cells. The twisted complex is generated by
the cells $e'_k$ identified with $-e''_k$, with boundary
 $\partial^- e'_{2j}=0$,
$\partial^- e'_{2j+1}=2e'_{2j}$. The coefficient map $\Z_4^-\rightarrow \Z_2$ induces an isomorphism  $\Z_2=H^1(\R P^\infty,\Z_4^-)\overset{\sim}\rightarrow H^1(\R P^\infty,\Z_2)$. In the Bockstein exact sequence, we get that 
$\beta^-_2: \Z_2=H^1(\R P^\infty,\Z_2)\rightarrow  H^2(\R P^\infty,\Z_2)=\Z_2$ vanishes, then
the coefficient map $\Z_2\overset{\times 2}\rightarrow \Z_4^-$ induces an isomorphism  $H^2(\R P^\infty,\Z_2)\overset{\sim}\rightarrow H^2(\R P^\infty,\Z_4^-)=\Z_2$, finally
$\beta^-_2: \Z_2=H^2(\R P^\infty,\Z_2)\rightarrow  H^3(\R P^\infty,\Z_2)=\Z_2$ is an isomorphism. Since $H^3(\R P^\infty,\Z_2)=\Z_2$ is generated by $\alpha^3$, we have $\alpha^3=(\beta_2^-\circ \beta_2)(\alpha)$ which proves b) by functoriality.
\end{proof}

\vspace{10pt}\begin{proof}[Proof of Theorem \ref{th_reformulate3}]

Formula b) in previous lemma establishes equivalence $(i)\Leftrightarrow (iii)$. The computation below shows equivalence with $(ii)$.

$$<x\smallsmile x \smallsmile x,[N]>=<x\smallsmile \beta_2(x),[N]>=
<x,\beta_2(x) \smallfrown [N]>=<x,\overline x>\ .$$
\end{proof}

In oriented case, a formula from Turaev \cite{Turaev} gives the triple cup in terms of the linking pairing, which gives an easier criterion.

\begin{theorem} \label{th_linking}Let $M$ be a compact oriented and  connected  $3$-dimensional manifold with fixed point free oriented  involution $\tau$, and classifying class $x\in H^1(N,\Z_2)$.
Let  \mbox{$\widetilde x\in Tors(H_1 (N, \Z))$} be the Poincaré dual of $\beta (x)$.
Denote by $\mathcal{L}_N:Tors( H_1 (N, \Z)) \otimes Tors( H_1 (N, \Z)) \longrightarrow \Q/\Z$ the linking pairing of $N$. Then we have
 $$ind_{\Z_2}(M, \tau)=3 \Longleftrightarrow  \mathcal{L}_N(\widetilde{x}, \widetilde{x})\neq 0\ .$$
\label {th2}
\end{theorem}
\vspace{10pt}\begin{proof}[Proof of Theorem \ref{th2}]

Turaev's theorem I in \cite{Turaev}, allows to express evaluation of the triple cup product on the fundamental class.
We reproduce below the argument in our specific case. Let $Dx\in H_2(N,\Z_2)$ be the Poincaré dual of $x$. Denote the  coefficient homomorphisms as follows:
$$\phi : H_1(\ .\ ; \Z) \rightarrow H_1(\ .\ ; \Z_2)\ ,\   \psi : H_2(\ .\ ; \Z_2) \rightarrow H_2(\ .\ ; \Q/\Z)\ .$$
The definition of the linking pairing uses the Bockstein homomorphism $$B: H_2(N,\Q/\Z)\rightarrow H_1(N,\Z)\ .$$
By functoriality of the Bockstein exact sequence with respect to coefficients, we get $\overline x=\phi(\tilde x)$,
and $B(\psi(Dx))=\tilde x$.
Then we have
$$\mathcal{L}_N(\tilde x,\tilde x)=\psi(Dx).\tilde x =\frac{1}{2}Dx.\phi(\tilde x)=\frac{1}{2}\langle x,\overline x\rangle\ .$$
The result follows.

\end{proof}
\section{Computation for surgery presentation of oriented 3-manifolds}

By a theorem of Lickorish and Wallace (see theorem 2.1 in \cite{Nikolai}), any compact oriented $3$-manifold can be obtained  by surgery on a framed link in $S^3$. In this section we discuss the $\Z_2$-index for an oriented $3$-manifold $M$ equipped with oriented involution $\tau$ when the quotient $N=M/\tau$ is given by a surgery presentation,
 which allows  to decide when the triple $(M, \tau, \R^n)$ has the Borsuk-Ulam property. 
 
We first recall some facts about surgery presentations. 
Let $\mathcal{L}$ be a framed link in the sphere $S^3$, $ N_\mathcal{L}$ be the oriented, compact, connected  3-manifold obtained by Dehn surgery on $\mathcal{L}$.
Recall that, if $L_1, L_2, ......, L_m$ are the components of $\mathcal{L}$, the linking matrix of $\mathcal{L}$ is an $m\times m$ matrix of integers $a_{ij}$, such that $a_{ij}=lk(L_i, L_j)$ if $i\neq j$, here $lk(L_i, L_j)$ denote the linking number of  $ L_i$ and $L_j$, and $a_{ii}$ is the framing of $L_i$. Denote by  $B_\mathcal{L}$ the linking matrix of $\mathcal{L}$, and by $\bar{B}_\mathcal{L}$ its reduction modulo $2$.
It is known (see for example \cite{Nikolai}) that $\mathcal{L}$ defines
a compact $4$-manifold  $W_{\mathcal{L}}$ such that $\partial W_{\mathcal{L}} =N_\mathcal{L}$. 
In fact, $W_{\mathcal{L}}$ is obtained by attaching $m$ index $2$ handles $D^2\times D^2$ to the ball $D^4$ via an oriented embedding $\amalg_m (- S^1\times D^2) \hookrightarrow S^3$.
 If $A$ is one of the groups : $\Z, \Z/n\Z$ or $\Q/\Z$, then  
 \begin{center}$H_2(W_{\mathcal{L}}, A)\simeq H^2(W_{\mathcal{L}}, A) \simeq A^m$, $H_2(W_{\mathcal{L}}, N_\mathcal{L}, A)\simeq H^2(W_{\mathcal{L}}, N_\mathcal{L}, A)\simeq A^m$
 \end{center}
 A canonical basis for homology is represented by the cores of the handles. The dual basis is used for cohomology. In the exact sequences
\begin{equation}0 \longrightarrow H_2(N_{\mathcal{L}}, A)\overset{i_*}\longrightarrow H_2(W_{\mathcal{L}}, A) \overset{j_*}\longrightarrow H_2(W_{\mathcal{L}}, N_\mathcal{L}, A)\overset{\partial}\longrightarrow H_1(N_{\mathcal{L}}, A)\longrightarrow 0  \label{sq(W)} \end{equation}
in homology and respectively
\begin{equation}0 \longrightarrow H^1(N_{\mathcal{L}}, A) \overset{\delta}\longrightarrow H^2(W_{\mathcal{L}}, N_\mathcal{L}, A) \overset{j^*}\longrightarrow H^2(W_{\mathcal{L}}, A)\overset{i^*}\longrightarrow H^2(N_{\mathcal{L}}, A)\longrightarrow 0  \label{sq(W_1)} \end{equation}
in cohomology, 
the homomorphism $j_*$ (respectively $j^*$ ) is given by the linking matrix $B_\mathcal{L}$ of $\mathcal{L}$. We  always choose these basis for $H_2(W_{\mathcal{L}}, A)$ (respectively $ H^2(W_{\mathcal{L}},N_\mathcal{L}, A)$) and $H_2(W_{\mathcal{L}}, N_\mathcal{L}, A)$  (respectively $H^2(W_{\mathcal{L}},  A)$). In addition to this, the exact sequence  \eqref{sq(W)}  induces the isomorphisms 
\begin{center}\mbox{$\ker B_\mathcal{L}\cong H_2(N_{\mathcal L}, A)\cong H^1(N_{\mathcal L}, A)$} and \mbox{$\mathrm{coker} B_\mathcal{L}\cong H_1(N_{\mathcal L}, A)\cong H^2(N_{\mathcal L}, A)$}.\end{center}
\begin{proof}[Proof of Theorem \ref{th8}]

Let $(M,\tau)$ be a compact oriented $3$-manifold with oriented free involution $\tau$. The quotient $N=M/\tau$
is homeomorphic to  $N_{\mathcal{L}}=\partial W_{\mathcal{L}}$, where $W_{\mathcal{L}}$ 
 is obtained from $D^4$ by attaching $m$ index $2$ handles 
along the framed link $\mathcal{L}$, as explained above.
 Let $x\in H^1(N_{\mathcal{L}},\Z_2)$ be the classifying class, and
let \mbox{$X\in H^2(W_{\mathcal{L}},N_{\mathcal{L}},\Z)\cong \Z^m$} be an integral lift  of the coboundary $\delta(x)\in H^2(W_{\mathcal{L}},N_{\mathcal{L}},\Z_2)$.
\begin{enumerate}
\item
We will get the first equivalence from Theorem \ref{th7} by computing $\beta(x)$.
Recall that the Bockstein homomorphism $\beta: H^1(N_{\mathcal{L}},\Z_2)\rightarrow H^2(N_{\mathcal{L}},\Z)$ is the connecting homomorphism associated with
the short exact sequence of cochain complexes
$$0\longrightarrow C^*(N_{\mathcal{L}},\Z)\overset{\times 2}\longrightarrow C^*(N_{\mathcal{L}},\Z)\overset{\rho}\longrightarrow C^*(N_{\mathcal{L}},\Z_2)\rightarrow 0\ .$$
Here we may use the complexes associated with any cell decomposition. Functoriality of the Bockstein exact sequence asserts that the Bockstein homomorphism can be computed with another short exact sequence of cochain complexes provided there exists a chain map to or from the previous one, inducing cohomology isomorphisms.
 The following commutative diagram will provide such.
\begin{equation}
  \begin{array}{ccccccccc}
       0 \rightarrow& H^1(N_{\mathcal{L}}, \Z) & \overset{\delta}\longrightarrow & H^2(W_{\mathcal{L}},N_{\mathcal{L}},\Z) & \overset{B_{\mathcal{L}}}\longrightarrow & H^2(W_{\mathcal{L}},\Z) &\overset{i^*} \longrightarrow& H^2(N_{\mathcal{L}}, \Z) & \rightarrow 0 \\
     &\times 2 \downarrow &  & \times 2\downarrow &  & \times 2\downarrow &  & \times 2\downarrow &  \\
   0 \rightarrow & H^1(N_{\mathcal{L}}, \Z) & \overset{\delta}\longrightarrow & H^2(W_{\mathcal{L}},N_{\mathcal{L}},\Z) & \overset{B_{\mathcal{L}}}\longrightarrow& H^2(W_{\mathcal{L}},\Z) & \overset{i^*}\longrightarrow & H^2(N_{\mathcal{L}}, \Z) & \rightarrow 0 \\
    & \rho\downarrow&  & \rho\downarrow&  & \rho\downarrow &  & \rho\downarrow &  \\
    0\rightarrow & H^1(N_{\mathcal{L}}, \Z_2) & \overset{\delta}\longrightarrow & H^2(W_{\mathcal{L}},N_{\mathcal{L}},\Z_2) & \overset{\bar{B}_{\mathcal{L}}}\longrightarrow & H^2(W_{\mathcal{L}},\Z_2)& \overset{i^*}\longrightarrow & H^2(N_{\mathcal{L}}, \Z_2)& \rightarrow 0 \\
  \end{array}
\label{diagr 3}
\end{equation}
For $A=\Z$ or $A=\Z_2$, the cohomology $H^*(N_{\mathcal{L}}, A)$ can be computed from the cochain complex
$$C^0_A=A\overset{0}\longleftarrow  C^1_A=H^2(W_{\mathcal{L}},A) \overset{B_{\mathcal{L}}\otimes A} \longleftarrow  C^2_A=H^2(W_{\mathcal{L}},N_{\mathcal{L}},A)\overset{0}\longleftarrow C^3_A=A$$
Let us denote by $V_\mathcal {L}$ an open tubular neighborhood of $\mathcal{L}$ in $S^3$. Then we have $N_{\mathcal{L}}=(S^3-V_\mathcal{L})\cup\left(\amalg_m D^2\times S^1\right)$ and we will consider $\dot N_{\mathcal{L}}=S^3-V_\mathcal{L}$ as included in $N_{\mathcal{L}}$.
Assume that the cell structure is such that
\begin{itemize}
 \item $\dot N_{\mathcal{L}}$ is a subcomplex, 
\item  it has one $0$-cell,
  \item  there is a relative cell structure for $(N_{\mathcal{L}},\dot N_{\mathcal{L}})$ with one $2$-cell $e^2_i$ for each component of $L$ which is the image of the oriented disc $ D^2\times 1$, completed with $m$  $3$-cells.
   \end{itemize}
A chain map to the cell cochain complex is defined as follows:
\begin{itemize}
\item the generator in degree zero evaluates $1$ on the $0$-cell,
\item the $i$-th generator in degree $1$ evaluates on a cell $\gamma$ as the linking $lk(L_i,\gamma)$,
\item the $i$-th generator in degree $2$ evaluates $1$ on $e^2_i$ and $0$ on the other $2$-cells,
\item the generator in degree $3$ evaluates $1$ on each oriented $3$-cell.
\end{itemize}
The connecting homomorphism $\beta$ is  obtained as follows.
 We have \mbox{$B_{\mathcal{L}}X\in Ker\rho$} and there exists $Y \in H^2(W_{\mathcal{L}},N_{\mathcal{L}},\Z)\cong\Z^m$ such that $2Y=B_{\mathcal{L}}.X$. Then $Y$ represents $\beta(x)=i^*(Y)\in H^2(N_{\mathcal{L}},\Z)\cong\mathrm{coker}(B_\mathcal{L})$.
 Finally $\beta (x)=i^*(\frac{1}{2}B_\mathcal{L}.X)$, so by 1) of Theorem \ref{th7}, we get the first equivalence of Theorem \ref{th8}.
\item
To show that $ind_{\Z_2}(M, \tau)=3 \Longleftrightarrow \frac{1}{2}\transp{X} .B_\mathcal{L}.X\neq 0$ mod $2$, we will use Theorems \ref{th_reformulate3} and \ref{th2}. Let $\bar x\in Tors H_1(N_{\mathcal{L}}, \Z_2)$ be the Poincaré dual of $\beta_2(x)$ and $\tilde{x} \in Tors H_1(N_{\mathcal{L}}, \Z)$ the Poincaré dual of $\beta(x)$.
We have $$\bar{x}=(D\circ \beta_2)(x)=(D\circ \rho \circ i^* )(Y) \; \text{and} \;\tilde{x} =D \circ \beta(x) = D \circ i^*(Y)$$
 where $D$ denotes the Poincaré duality isomorphism
 and \mbox{$Y=\frac{1}{2}B_{\mathcal{L}}(X)$}. Then 
$$\langle x,\bar{x}\rangle=2 \mathcal{L}_N(\tilde x,\tilde x)=2\langle x, (D\circ i^*)(Y)\rangle=2\langle \delta x, Y\rangle .$$
Using that $\delta (x) =\rho(X)$, we deduce
\begin{eqnarray*}
\langle x,\bar{x}\rangle&=&\transp{X}Y \text{ mod. $2$}\\
&=&\frac{1}{2} \transp{X} B_\mathcal{L}X \text{ mod. $2$\ .}
\end{eqnarray*}
The result follows.
\end{enumerate}
\hfill \end{proof}

\noindent{\bf Recovering classical results.} We may quickly check that Theorem \ref{th8} recovers well known results. For the sphere $S^3$ with antipodal action the orbit space is the projective space $\R P^3$, obtained by surgery on an unknot with framing 2.  We have $B_{\mathcal{L}}=(2)$. Let $x$ be the classifying class in $H^1(\R P^3, \Z_2) \simeq \Z_2$. An integral lift of $\delta x$ is $1$, and 
$\frac{1}{2}\transp{X} B_\mathcal{L}X =1$. This proves that the index is $3$ as expected.

For the projective space $\R P^3$ with the action $\tau$ induced by the multiplication by the complex number $i $, the orbit space is the lens space 
 $L(4, 1)$, which is obtained by surgery on a $(-4)$-framed unknot
\cite[Example 5.3.2]{Gompf}, and $B_\mathcal{L}=(-4)$. The classifying class $x$ is the generator
of $H^1(L(4, 1),\Z_2)=\Z_2$. An integral lift of $\delta (x)$ is $1$.
We have 
$\frac{1}{2} B_{\mathcal{L}} X=2 $ does not vanish in $H^2(L(4, 1),\Z)=\Z_4$,
and
$\frac{1}{2} \transp{X} B_\mathcal{L}X \text{ mod. 2\: }= - 2=0 \text{ mod }2$.
{This proves $ind_{\Z_2 }(\R P^3, \tau)\leq2$. Moreover,
$\frac{1}{2} B_{\mathcal{L}} X=2$  does not vanish in  \mbox{$\mathrm{coker}( B_{\mathcal{L}})= \Z_4$}. Therefore,  $ind_{\Z_2 }(\R P^3, \tau)=2.$}
This result was first obtained by Stephan Stolz 
\cite{stolz}\footnote{The index there is called level and is different from our definition, namely the level $s(X,\tau)$  is $ind_{\Z_2 }(X, \tau)+1$.}
 in 1989.
\section{Some examples and applications}
\subsection{Application to  lens spaces}
{Lens spaces are classical examples of closed orientable 3-manifolds. They play an important role in the history of algebraic topology.
Their classical definition was stated first by Tieze in 1908 and their name "Lens spaces" was introduced by Threlfall and Seifert in 1933.
There are many descriptions of Lens spaces (see \cite{Rolfsen}), the first one is to consider them as the quotient of the 3-ball, where the top hemi-sphere is identified with the bottom hemi-sphere by a rotation of angle $2\pi q/p$ about the Z-axis followed by a reflexion in $(x-y)$ plane, for some $p\in \Z$,  $p\geq 2$ and some $q\in \Z$ relatively prime to $p$.
The 3-ball used in this definition is often drawn in the shape of a lens. In the next description, we are going to show that it represents a fundamental domain of a $\Z/p$ action on the 3-sphere $S^3$ as follows.}

Let $p, q$ be relatively prime integers such that $p\geq 2$ and $1<q < p$. Consider the transformation 
\begin{equation*}
  \begin{array}{ccc}
   T: \Z/p \times S^3& \longrightarrow & S^3 \\
    ([k], (z_1, z_2))& \longmapsto &  (z_1 e^{\frac{2\pi ik}{p}}, z_2 e^{\frac{2\pi ikq}{p}})\\
 \end{array}
\end{equation*}
$T$  generates an action of the group $\Z/p$ in $S^3$. These action is free and finite, so the projection $\pi : S^3 \longrightarrow S^3/T$ is a covering map with $p$ sheets. The orbit space $S^3/T$ is called a Lens space and denoted by $L(p, q).$\\
The purpose of this section is to study the Borsuk-Ulam theorem for double covering of Lens spaces using surgery presentation. 
We Know that they can be obtained from the trivial knot by a rational surgery with framing $-\frac{p}{q}$, see for example \cite{Nikolai} and \cite{Rolfsen}. But it can also be represented by an integral surgery on a framed link $\mathcal{L}$ having $n$-components with framing $(a_i)_{i=\overline{1,n}}$ (see the same references or \cite{Kirby}), where $-\frac{p}{q}=[a_1, a_2, ...., a_n]$ is a continued fraction decomposition.
That is a decomposition of the form $$-\frac{p}{q}=a_1 -\frac{1}{a_2-\frac{1}{\ddots\; a_{n-1}-\frac{1}{a_n}}}.$$
The linking matrix has diagonal entries $a_i$. The sign of the non zero linking numbers depends on a choice of orientations. It is convenient here and in the next example to follows \cite[Figures 14,17]{Kirby}.
Then we get  the $n\times n$ matrix defined by
 $$B_{\mathcal{L}}=\left(
  \begin{array}{ccccc}
  a_1 & -1 & 0 & \cdots& 0 \\
 -1 & a_2& -1& \ddots & 0 \\
 0 & \ddots & \ddots &\ddots &  0 \\
  \vdots  & \ddots &  -1 &a_{n-1}  &-1 \\
 0  &\cdots & 0& -1 &a_n\\
  \end{array}
\right)$$ 
Note that $\mathcal{L}$  also specifies a 4-manifold $W_\mathcal{L}$ with boundary $L(p,q)$, obtained by adding
2-handles to the 4-ball along $\mathcal{L}$. Using the matrix above, homology and cohomology groups of lens spaces can be computed. In a part of the proof of Proposition \ref{prop L} below, we obtain that 
 $$ H_2(L(p, q), \Z)\simeq 0 \simeq H^1( L(p, q), \Z), \; \; \; H_1(L(p,q), \Z) \simeq \Z_p \simeq H^2 (L(p, q), \Z)$$  and also
$$H^{1}(L(p, q), \Z_2) \simeq
\left\{
\begin{array}{ccc}
    \Z_2  & \text{ if}\; p \; \text{is even}   \\
   0  & \text{if}\; p\;  \text{is odd}
\end{array}
\right. $$
After this introduction, we see that a Lens space $L(p,q)$ has connected double cover unique up to equivalence, if and only if
$p$ is even. Indeed, we have
$Hom(\pi_1(N), \Z_2)\approx Hom (H_1(N), \Z_2)\approx H^1 (N, \Z_2)$. From now we assume that $p$ is even, then 
the non trivial element $\alpha \in H^{1}(L(p, q), \Z_2)$ gives rise of a pair $(M, \tau)$, where $M$ is a closed connected 3-manifold and $\tau$ is a fixed point free involution on $M$ associated to the double covering with  action $\tau$ given by the non trivial deck transformation. We can check that the covering space $M$ is itself a lens space, namely $L(\frac{p}{2},q)$.
\begin{proposition} 
Let $L(p, q)$ be a lens space such that $p$ is even, $(M, \tau)=(L(\frac{p}{2},q),\tau)$ be as above, and $x\in H^1(L(p,q), \Z_2)$ be the non trivial class (which classifies the cover).
\begin{enumerate}
\item We have $\beta(x)\neq 0$. Therefore $ind_{\Z_2}(M, \tau)\geq2$ and $(M, \tau, \R^2)$ is always a Borsuk-Ulam triple.
\item  $ind_{\Z_2}(M, \tau)=3$ if and only if  $p\equiv 2$ mod 4.
\end{enumerate}
\label{prop L}
\end{proposition}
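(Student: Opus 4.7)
The plan is to verify both claims directly from the cellular cochain complex of $N=L(p,q)$ displayed above, using Theorem \ref{th7} for (1) and Theorem \ref{th2} for (2). Part (1) reduces to an explicit Bockstein computation, and part (2) follows by identifying the linking self-pairing of the Poincar\'e dual of $\beta(x)$ with a classical invariant of the lens space.

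For part (1): since $p$ is even, the mod $2$ reduction of the cellular cochain differential is identically zero, so $H^1(L(p,q),\Z_2)\cong\Z_2$ is generated by $x=[e^1]$ and $H^2(L(p,q),\Z)\cong\Z/p$ is generated by $[e^2]$. Lift $x$ to the integer cochain $e^1$; its integer coboundary is $d e^1 = p\,e^2$, and dividing by $2$ yields $\beta(x)=(p/2)[e^2]$, a non-zero element of $\Z/p$ for $p\geq 2$. Theorem \ref{th7}(2) then gives $\mathrm{ind}_{\Z_2}(M,\tau)\geq 2$, so $(M,\tau,\R^2)$ always satisfies Borsuk-Ulam.

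For part (2): under Poincar\'e duality $H^2(L(p,q),\Z)\cong H_1(L(p,q),\Z)\cong\Z/p$, the class $\beta(x)=(p/2)[e^2]$ corresponds to $\widetilde{x}=(p/2)\,g$ for a generator $g\in H_1(L(p,q),\Z)$. The classical linking form of the lens space satisfies $\mathcal{L}_{L(p,q)}(g,g)=\pm q/p \in \Q/\Z$, whence
$$
\mathcal{L}_{L(p,q)}(\widetilde{x},\widetilde{x})=\pm\frac{q\,p}{4}\pmod 1.
$$
If $p\equiv 0\pmod 4$ this is an integer and vanishes in $\Q/\Z$. If $p\equiv 2\pmod 4$, writing $p=4k+2$ gives $qp/4=qk+q/2$, which reduces to $1/2$ modulo $1$ since $q$ is odd (being coprime to $p$). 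Theorem \ref{th2} then yields $\mathrm{ind}_{\Z_2}(M,\tau)=3$ if and only if $p\equiv 2\pmod 4$.

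The main points to verify are that $\widetilde{x}$ really equals $(p/2)g$ under the Poincar\'e duality induced by the chosen cell structure and orientation, and that the classical formula for the linking form of $L(p,q)$ applies with this convention. The sign ambiguity in $\mathcal{L}_{L(p,q)}(g,g)=\pm q/p$ is harmless for our purpose, since we only need to detect (non-)vanishing in $\Q/\Z$, and both signs yield the same dichotomy according to the residue of $p$ modulo $4$.
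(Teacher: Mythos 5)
Your proposal is correct and follows essentially the same route as the paper: part (1) via the explicit cellular Bockstein computation giving $\beta(x)=\tfrac{p}{2}[e^2]\neq 0$ in $\Z/p$, and part (2) via Poincar\'e duality and the classical linking form $\mathcal{L}_{L(p,q)}(g,g)=\pm q/p$, yielding $\mathcal{L}(\widetilde{x},\widetilde{x})=\pm pq/4$, which is nonzero in $\Q/\Z$ exactly when $p\equiv 2 \bmod 4$. Your explicit remark that the sign ambiguity is harmless and that $q$ odd forces $q/2\equiv 1/2$ is a slightly more careful rendering of the paper's observation that the result ``does not depend on $q$.''
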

\begin{proof}
We are going to use the results of Theorem \ref{th8}. By induction we see that the linking matrix is equivalent to the matrix \\
$$B_{\mathcal{L}}=\left(
  \begin{array}{cccccccccc}
    0 & 0 & 0  & \ldots  &0 &  \alpha_1\\
    -1& 0 &0&\ldots &  0 &  \alpha_2  \\
  0   & -1  & 0 &\ldots &0 &  \alpha_3 & \\
\vdots & \ddots &\ddots  &  &    & \vdots  \\
    0 & \ldots &   0&  -1&   0&  \alpha_{n-1} \\
    0 & \ldots &   0&  0& -1 &\alpha_n= a_n
  \end{array}
\right)
$$
where $\alpha_n =a_n$, $\alpha_{n-1} =-1+a_{n-1}a_n$ and $\alpha_i =-\alpha_{i+2}+a_i \alpha_{i+1}$ for $n-2\geq i\geq 1$. Furthermore, we remark that $$\frac{\alpha_1}{\alpha_2}=\frac{a_1\alpha_2 -\alpha_3}{\alpha_2}= a_1-\frac{\alpha_3}{\alpha_2}= a_1-\frac{1}{\frac{\alpha_2}{\alpha_3}}=a_1-\frac{1}{a_2 -\frac{1}{a_3-\frac{\alpha_4}{\alpha_3}}}= \dots=a_1 -\frac{1}{a_2-\frac{1}{\ddots\; a_{n-1}-\frac{1}{a_n}}}.$$
Hence we have $\frac{\alpha_1}{\alpha_2} = -\frac{p}{q}$.
For each $i$ we get $\alpha_i$ coprime with $\alpha_{i+1} $. We deduce \mbox{$det( B_\mathcal{L})=\alpha_1=\pm p$}. Therefore, as expected, we get $$\ker (B_\mathcal{L}) \simeq H_2(L(p, q), \Z)\simeq H^1(L(p, q), \Z)=0$$ $$coker (B_\mathcal{L})= H_1(L(p,q), \Z) \simeq \Z_p \simeq H^2 (L(p, q), \Z)$$ and $$ ker (B_\mathcal{L}\otimes \Z_2)\simeq H_2(L(p, q), \Z_2)\simeq  H^1(L(p, q), \Z_2) \simeq \left\{
\begin{array}{ccc}
    \Z_2   & \mbox{\text if} \:\: \text{ p is even }\\
   0  & \mbox{\text if} \: \:\text{ p is odd}
 \end{array}
\right..$$
Consider the case where $p$ is even and let $x\in H^1(L(p,q), \Z_2)\simeq \Z_2$ be the classifying class which is the unique non trivial element in this group. Using the commutative diagram \ref{diagr 3}, consider $X\in H^2( W_{\mathcal{L}}, N_{\mathcal{L}}, \Z) \simeq \Z^n$ we can choose as  integral lift of $\delta(x)$ the vector $X=\transp(\alpha_2, \alpha_3, ...,  \alpha_{n-1}, \alpha_n, 1)$. 
Therefore,  $$i^*(\frac{1}{2} B_{\mathcal{L}} X )=\frac{1}{2} \transp(\alpha_1, 0,.......,0) \neq 0 \in H^2( N_\mathcal{L}, \Z) \cong \Z_p,$$ Hence from statement 1 in \ref{th8} we obtain $ind_{\Z_2}(M, \tau)\geq2$.\\ 
Furthermore, $$\langle x,\bar{x}\rangle = \frac{1}{2} \transp X B_{\mathcal{L}} X \;\text{mod}\; 2= \frac{1}{2} \alpha_1\alpha_2\text{ mod }2=\frac{1}{2} pq\text{ mod }2.$$
From statement 2 in \ref{th8}, this proves that $ind_{\Z_2}(M, \tau)=3$ if and only if  $p\equiv 2$ mod 4.
\end{proof}

\subsection{Double covers of torus bundles.}
In this part we apply our surgery method for Borsuk-Ulam index to double covers of oriented torus bundles over the circle. Following Thurston classification, these oriented $3$-manifolds split in three subclasses having respectively Euclidean geometry, Nil geometry and Sol geometry. As far as we know, the discussion of Borsuk-Ulam index in the last case, i.e. when the monodromy is Anosov, is knew.

\begin{definition}

Let $A$ be a self diffeomorphism  of the torus $T^2$.
A torus bundle is the identification space
$$T_A= T^2 \times [0,1]/ (x,1) \sim (A(x), 0)$$
\end {definition}
For a torus bundle $T_A$, we can isotope  $A$ to be a linear diffeomorphism, which
means that we have $A \in GL_2(\Z)$. $T_A$ is called a torus bundle with monodromy matrix $A$. In orientable case, $A$ must be in the special linear group $SL_2(\Z)$.

Recall that $SL_2(\Z)$ is generated by $S=\left(\begin{array}{cc}0&-1\\1&0\end{array}\right)$ and $T=\left(\begin{array}{cc}1&1\\0&1\end{array}\right)$. Denote by $I$ the unit matrix. Using that $S^2=-I=SIS$, we get that any $A$
can be decomposed as 
$A=S^\epsilon T^{a_1}ST^{a_2}\dots ST^{a_n}S^\eta$, with $\epsilon,\eta\in\{0,1\}$.
If $\epsilon$ or $\eta$ is $0$, then we may replace $S^0$ by  $SISISIS$. In all cases we can obtain a decomposition starting and finishing with $S$:
$$A=\left(\begin{array}{cc}a&b\\c&d\end{array}\right)=ST^{a_1}ST^{a_2}\dots ST^{a_n}S\ .$$ 
Then a surgery presentation for $T_A$, \mbox{$A\in SL_2(\Z)$}, is given in \cite[Theorem A.4]{Kirby}. 
With appropriate numbering of the $n+2$ components, the linking matrix $B_{\mathcal L}$ is as follows :
 $$B_{\mathcal{L}}=\left(
  \begin{array}{cccccccc}
  a_1 & -1 & 0 &\cdots& \cdots& 0& -1& 0 \\
 -1 & a_2& -1&\ddots& \ddots & 0& 0 & 0 \\
0 & \ddots & \ddots&\ddots &\ddots &  0& 0& 0 \\
0 & \ddots & \ddots&\ddots &\ddots &  0& 0& 0 \\
  \vdots  & & \ddots & -1 &a_{n-1}  &-1& 0& 0 \\
 0 &\cdots & &0& -1 &a_n& -1& 0\\
 -1 & 0&\cdots &0& 0 &-1& 0& 0\\
 0 &\cdots &\cdots& 0& 0 &0& 0& 0
  \end{array}
\right)$$ 

 We first describe $H^1(T_A,\Z_2)$.
 \begin{lemma} 
Let $T_A$  be an oriented torus bundle with monodromy matrix \mbox{$A=\left(\begin{array}{cc}
a & b \\ 
c & d
\end{array}\right) \in SL_2(\Z) $}.
\begin{enumerate}[a)]
\item If $a$ and $d$ do not have the same parity, then $H^1(T_A, \Z_2) \simeq \Z_2$.
\item If $a \equiv d \equiv 0$ mod $2$, then $b \equiv c \equiv 1$ mod $2$ and $H^1(T_A, \Z_2) \simeq \Z^2_2$.
\item If  $a \equiv d \equiv 1$ mod $2$ then $bc \equiv 0$ mod $2$, and if  $b \equiv 1$ mod $2$ or  $c \equiv 1$ mod $2$
then we have \mbox{$H^1(T_A, \Z_2) \simeq \Z^2_2$}.
\item If $A\equiv I \text{ mod } 2$, then $H^1(T_A, \Z_2) \simeq \Z^3_2$.
\end{enumerate}
 \end{lemma}
 \begin{proof}
 
 This can be proved directly from the monodromy matrix with a Mayer-Vietoris argument. We will rather use the linking matrix in order to have a description of generators in the surgery picture. 
 We  use successive  row transformations on $B_{\mathcal L}$ in order to clarify its rank and kernel, namely
  $R_{n-1} \leftarrow R_{n-1}+a_{n} R_{n}$,
 $R_i \leftarrow R_i+a_{i} R_{i+1}-R_{i+2}$, for $i=n-2$ to $1$,
  and then  $R_{n+1}\leftarrow R_{n+1}-R_2$.
We obtain a matrix $B'$ with same kernel mod $2$. 
$$B'_{\mathcal L}=\left(
  \begin{array}{ccccccccccccc}
    0 & 0 & 0  & \ldots & &\cdots& &0 & 0 &  \alpha_1&\gamma_{1}-1&0\\
    -1& 0 &0&\ldots & &  \cdots& & 0 & 0 &  \alpha_2&\gamma_{2}&0  \\
  0   & -1  & 0 &\ldots & & \cdots& &0 &  0 & \alpha_3&\gamma_3&0 \\
\vdots & \ddots &\ddots &\ddots & & &  &  &   & \vdots &\vdots& 0\\
\vdots &  & \ddots &\ddots &\ddots &&  &  &  &\vdots &\vdots& \\
0& \ldots  & &0  &-1 &0  &\cdots& 0 & 0  &  \alpha_{i}&\gamma_i& 0\\
    \vdots &  &  & & \ddots & \ddots&\ddots &  & & \vdots &\vdots& 0\\
     \vdots &  &  & &&\ddots  &\ddots  &\ddots  && \vdots &\vdots& 0 \\
    0 & \ldots &  &  \ldots& & & 0&  -1&   0& \alpha_{n-1} &\gamma_{n-1}&0\\
    0 & \ldots &  &  \ldots& & & 0&  0& -1 &  \alpha_n&\gamma_{n}& 0 \\
    0 & \ldots &  &  \ldots& & & 0&  0& 0 &-1-\alpha_2&-\gamma_2& 0 \\
0 & \ldots &  &  \ldots& & & 0&  0& 0 &0& 0& 0
  \end{array}
\right)
$$
with the decreasing recursive formulas  $\alpha_{n}=a_n$, $\gamma_n=-1  $, $\alpha_{n+1} =1$, $\gamma_{n+1}=0$ and $\alpha_{i}= a_i\alpha_{i+1}-\alpha_{i+2}$, $\gamma_{i}= a_i\gamma_{i+1}-\gamma_{i+2}$ for $i=n-1$ to $1$.

A decreasing recursion shows that for $i=n$ to $1$ we have
$$ST^{a_i}\dots ST^{a_n}S=\left(\begin{array}{cc}
-\alpha_{i+1} & -\gamma_{i+1} \\ 
\alpha_i & \gamma_i
\end{array}\right) $$
Hence we get
$$A=\left(\begin{array}{cc}
a & b \\ 
c & d
\end{array}\right) =\left(\begin{array}{cc}
-\alpha_{2} & -\gamma_{2} \\ 
\alpha_1 & \gamma_1
\end{array}\right) $$ 
We see that a solution \mbox{$x={}^t(x_1,\dots,x_{n+2})$} of the modulo $2$ kernel equation is uniquely determined by the last three components which are submitted to the
conditions
$$\left\lbrace
\begin{array}{lll}
\alpha_1x_n+(\gamma_1-1)x_{n+1}&=&0\\
(-1-\alpha_2)x_n-\gamma_2x_{n+1}&=&0
\end{array}
\right.$$
which is
$$\left\lbrace
\begin{array}{lll}
cx_n+(d-1)x_{n+1}&=&0\\
(a-1)x_n+bx_{n+1}&=&0
\end{array}
\right.$$
The description of the basis will use the elements in $\Z_2^{n+2}$ defined by $$u =\left(\begin{array}{c}0\\0\\\vdots\\0\\0\\0\\ 1\end{array}\right),
v =\left(\begin{array}{c}\gamma_2\\\gamma_3\\\vdots\\\gamma_n\\0\\1\\ 0\end{array}\right), w =\left(\begin{array}{c}\alpha_2\\\alpha_3\\\vdots\\\alpha_n\\1\\0\\ 0 \end{array}\right).$$ 
\begin{enumerate}[a)]
\item If  $a, d$ do not have the same parity then $$ H^1(T_A,\Z_2)\simeq \ker (B_\mathcal{L}\otimes \Z_2) =\Z_2,$$
 generated by $u$.
\item If $a, d$  are even then $b$ and $c$ are odd and $$H^1(T_A, \Z_2)\simeq \ker (B_\mathcal{L}\otimes \Z_2) \simeq \Z^2_2,$$ with basis $(u,v+w)$.
\item If $a, d$  are odd, then $bc$ is even. If moreover $b \equiv 1$ mod $2$ or  $c \equiv 1$ mod $2$, then $$  H^1(T_A, \Z_2)\simeq \ker (B_\mathcal{L}\otimes \Z_2) \simeq\Z^2_2.$$
 A basis is $(u,w)$ when $b$ is odd, and $(u,v)$ when $c$ is odd.
 \item If $a, d$  are odd and $b \equiv c \equiv 0$ mod $2$, then $$ H^1(T_A, \Z_2)  )\simeq \ker (B_\mathcal{L}\otimes \Z_2) \simeq\Z_2^3,$$ with basis $(u,v,w)$.
\end{enumerate}

 \end{proof}

 We will also need to know about the cokernel.
 \begin{lemma} 
Let $T_A$  be an oriented torus bundle with monodromy matrix \mbox{$A=\left(\begin{array}{cc}
a & b \\ 
c & d
\end{array}\right) \in SL_2(\Z)$} and surgery presentation given by a link $\mathcal{L}$ as above. Then $H^2(T_A,\Z)$ is isomorphic to $$\Z\oplus \mathrm{coker}(A-I).$$
If $A=ST^{a_1}ST^{a_2}\dots ST^{a_n}S$ is a decomposition giving a surgery link presentation $\mathcal{L}$ and
 $e_1,\dots,e_{n+2}$ is the canonical basis of the module $\Z^{n+2}$ on which the linking matrix $B_\mathcal{L}$ acts, then the free summand $\Z$ is generated by $e_{n+2}$, and 
$A-I$ is a linear map \mbox{$\Z^2\cong\mathrm{Span}(e_{n},e_{n+1}) \rightarrow \mathrm{Span}(e_{n+1},e_{1})$}.
 \end{lemma} 
 \begin{remark}
 If $a+d=2$ and $A\neq I$, then the matrix $A-I$ has rank one. Its image is a discrete subgroup in a line, hence a cyclic group.
 We denote by $\gcd(A-I)$ a generator of this group or equivalently which minimize the norm. It can be obtained from the coloms of $A-I$ by adapting the euclidean algorithm.
 \end{remark}

\begin{proof}
The cokernel of   $B_{\mathcal L}$ is equal to $\Z^{n+2}$ modulo the space generated by the colomns. Using colomns $2$ to $n$ we get that the cokernel is generated by the  basis vectors $e_1$, $e_{n+1}$, $e_{n+2}$.
 This implies that 
$$\mathrm{coker}(B_{\mathcal L})=E/\mathrm{Im}(B_{\mathcal L})\cap E\ ,$$
where $E=\mathrm{Span}(e_{n+2},e_{n+1},e_{1})$. 

A linear combination $y_1e_1+y_{n+1}e_{n+1}+y_{n+2}e_{n+2}$ belongs to $\mathrm{Im}(B_{\mathcal L})$ if and only if
the linear system $B_{\mathcal L}x=y$ has at least a solution; here $y_2=y_3=\dots=y_n=0$.
The  row transformations used for kernel produce an equivalent system $B'_{\mathcal L}x=y$. Note that the right hand of the equation does not change under the row transformations, because we modify with rows $n$ downto $2$ for which the component of $y$ vanishes.
We see that a solution is uniquely determined by the last three components which are submitted to the
conditions
$$\left\lbrace
\begin{array}{lll}
\alpha_1x_n+(\gamma_1-1)x_{n+1}&=&y_1\\
(-1-\alpha_2)x_n-\gamma_2x_{n+1}&=&y_{n+1}
\end{array}
\right.$$
which is
$$\left\lbrace
\begin{array}{lll}
(a-1)x_n+bx_{n+1}&=&y_{n+1}\\
cx_n+(d-1)x_{n+1}&=&y_1
\end{array}
\right.$$
The conclusion follows.
\end{proof}

 We are now able to discuss the Borsuk-Ulam index for all double covers of $T_A$. In the case $a+d=2$ the colomns of the matrix
 $A-I=\left(\begin{array}{cc}a-1&b\\c&d-1\end{array}\right)$ generate a cyclic group. 
 
\begin{theorem}
Let $T_A$  be an oriented torus bundle with monodromy matrix $$A=\left(\begin{array}{cc}
a & b \\ 
c & d
\end{array}\right) \in SL_2(\Z), $$ and let $(M, \tau)$ be the double covering of $T_A$ with deck transformation $\tau$ defined by the non zero characteristic class $x=\alpha u+\beta v+\gamma w$, $\alpha,\beta,\gamma\in \Z_2$ submitted to the kernel condition . The Borsuk-Ulam index
$ind_{\Z_2}(M, \tau)=f(A,x)$ is given as follows.
\begin{enumerate}
\item For every $A$ we have $f(A,u)=1$.
\item If $x=\beta v+\gamma w$ is a non zero characteristic class, then 
$f(A,u+x)=f(A,x)$.
\item All Borsuk-Ulam indices are given by following.
\begin{enumerate}
\item If $a$ and $d$ do not have the same parity, then only $u$ is characteristic and $f(A,u)=1$
\item If $a \equiv d \equiv 0$ mod $2$, then 
$$f(A, u) =1$$
$$f(A,v+w)=3 \Leftrightarrow f(A,u+v+w)=3 \Leftrightarrow (a+b)(c+d -2) \equiv 1 \;\;\text{mod} \;\; 4,$$
$$f(A,v+w)=1 \Leftrightarrow f(A,u+v+w)=1\Leftrightarrow
a+d=2\text{ and }\left(\begin{array}{cc}a+b-1\\c+d-1\end{array}\right)\in  2\gcd(A-I)\, \Z
.$$
Here $\gcd(A-I)$ is a generator of the cyclic group generated by the columns of $A-I$.
\item If  $a \equiv d \equiv 1$ mod $2,$ then
\begin{enumerate}
\item if $b \equiv 1$ mod $2$ we have
$$f(A, u) =1$$
$$f(A,w)=3 \Leftrightarrow f(A,u+w)=3 \Leftrightarrow  a(c-1) \equiv 1\;\;\text{mod} \;\; 4,$$
$$f(A,w)=1 \Leftrightarrow f(A,u+w)=1 \Leftrightarrow a+d=2\text{ and }\left(\begin{array}{cc}a-1\\c\end{array}\right)\in  2\gcd(A-I)\, \Z
.$$
\item if $c \equiv 1$ mod $2$ we have
$$f(A, u) =1$$
$$f(A, v)=3 \Leftrightarrow f(A, u+v)=3 \Leftrightarrow  b\equiv 2 \;\;\text{mod} \;\; 4,$$
$$f(A, v)=1 \Leftrightarrow f(A, u+v)=1 \Leftrightarrow a+d=2 \text{ and } \left(\begin{array}{cc}b\\d-1\end{array}\right)\in  2\gcd(A-I)\, \Z
..$$
\end{enumerate}
\item If $A\equiv I \text{ mod } 2$, there are $7$ possibilities for the classifying class $x$, and we have:
 $$f(A, u) =1$$
 $$f(A, v)=3 \Leftrightarrow f(A, u+v)=3 \Leftrightarrow  b\equiv 2 \;\;\text{mod} \;\; 4,$$
$$f(A, v)=1 \Leftrightarrow f(A, u+v)=1 \Leftrightarrow  a+d=2 \text{ and } \left(\begin{array}{cc}b\\d-1\end{array}\right)\in  2\gcd(A-I)\, \Z .$$
$$f(A,w)=3 \Leftrightarrow f(A,u+w)=3 \Leftrightarrow  a(c-1) \equiv 1 \;\;\text{mod} \;\; 4,$$
$$f(A,w)=1 \Leftrightarrow f(A,u+w)=1 \Leftrightarrow a+d=2\text{ and }\left(\begin{array}{cc}a-1\\c\end{array}\right)\in  2\gcd(A-I)\, \Z .$$
$$f(A,v+w)=3 \Leftrightarrow f(A,u+v+w)=3 \Leftrightarrow (a+b)(c+d -2) \equiv 1 \;\;\text{mod} \;\; 4,$$
$$f(A,v+w)=1 \Leftrightarrow f(A,u+v+w)=1\Leftrightarrow 
a+d=2\text{ and }\left(\begin{array}{cc}a+b-1\\c+d-1\end{array}\right)\in  2\gcd(A-I)\, \Z
.$$
\end{enumerate}
\end{enumerate}
\label{th11}
\end{theorem}

\begin{proof}

We have integral lifts for $u$, $v$, $w$:
$$ U =\left(\begin{array}{c}0\\0\\\vdots\\0\\0\\0\\ 1\end{array}\right),
V =\left(\begin{array}{c}\gamma_2\\\gamma_3\\\vdots\\\gamma_n\\0\\1\\ 0\end{array}\right),W=\left(\begin{array}{c}\alpha_2\\\alpha_3\\\vdots\\\alpha_n\\1\\0\\ 0 \end{array}\right).$$
We get $B_{\mathcal L}U=0$ which proves 1. and 2. Then we compute:
$$
B_{\mathcal L}V =\left(\begin{array}{c}\gamma_1-1\\0\\\vdots\\0\\-\gamma_2\\ 0\end{array}\right)=\left(\begin{array}{c}d-1\\0\\\vdots\\0\\b\\ 0\end{array}\right),
B_{\mathcal L}W=\left(\begin{array}{c}\alpha_1\\0\\\vdots\\0\\-\alpha_2-1\\ 0 \end{array}\right)=\left(\begin{array}{c}c\\0\\\vdots\\0\\a-1\\ 0 \end{array}\right).$$
\begin{enumerate}[(a)]
\item  The case $a\not\equiv d$ modulo $2$ is covered by 1.
\item Case where $a,d$ are even and hence $b,c$ are odd. The characteristic class will take the values $u, v+w, u+v+w$. The case $u$ is covered by 1). Furthermore, 
we have$$ B_{\mathcal L}(V+W) =\left(\begin{array}{c}c+d-1\\0\\\vdots\\0\\a+b-1\\ 0\end{array}\right)\ .$$ and $\transp(V+W) B_{\mathcal L} (V+W) = - [(a+b)(c+d -2) + 1]$. Now we use \ref{th8}.
The index $3$ criterion gives $f(A, v+w)=3$ if only if $(a+b)(c+d -2) + 1 \equiv 2 \;\;\text{mod} \;\; 4$. \\
For the index $1$ criterion, following the description of $\mathrm{coker}(B_\mathrm{L})$, we get that \mbox{$\frac{1}{2}B_{\mathcal L}(V+W)$} vanishes in the cokernel if and only if the system below has integral solution.
$$\left\lbrace\begin{array}{lcl}(a-1)x+by&=&\frac{a+b-1}{2}\\
cx+(d-1)y&=&\frac{c+d-1}{2}\end{array}
\right.$$
In the case $a+d\neq 2$ the system has a unique rational solution $x=y=\frac{1}{2} $ and no integral solution.
In the case $a+d=2$, the system has integral solution if and only if $\left(\begin{array}{c}\frac{a+b-1}{2}\\\frac{c+d-1}{2}\end{array}\right)$ belongs to the group generated by the columns of $(A-I)$ which is $\Z\gcd(A-I)$, whence the condition written in theorem.
\item If  $a \equiv d \equiv 1$ mod $2$ then we have two cases 
\begin{enumerate}
\item if $b \equiv 1$ mod $2$, the characteristic class will take the values $u$, $w$ or $u+w$. $f(A, u)$ is computed in 1). \\
We have $\transp W B_{\mathcal L} W = - ac+a-1$. 
The index $3$ criterion gives $f(A, w)=3$ if only if $ac-a+1 \equiv 2 \;\;\text{mod} \;\; 4$. \\
Furthermore, \mbox{$\frac{1}{2}B_{\mathcal L}W$} vanishes in the cokernel if and only if the system below has integral solution.
$$\left\lbrace\begin{array}{lcl}(a-1)x+by&=&\frac{a-1}{2}\\
cx+(d-1)y&=&\frac{c}{2}\end{array}
\right.$$
In the case $a+d\neq 2$ the system has a unique rational solution $x=\frac{1}{2} $, $y=0$, and no integral solution.
In the case $a+d=2$, the system has integral solution if and only if $\left(\begin{array}{c}\frac{a-1}{2}\\\frac{c}{2}\end{array}\right)$ belongs to  $\Z\gcd(A-I)$. 

\item if $c \equiv 1$ mod $2$, the characteristic class will take the values $u$, $v$ or $u+v$.  We have $\transp V B_{\mathcal L} V =b(-d+2)$.
Note that $d$ is odd and we get
$f(A, v)=3$ if only if $ b\equiv 2 \;\;\text{mod} \;\; 4.$\\
Furthermore, using the expression of $B_{\mathcal L}V$ above, we see that
$f(A, v)=1$ if only if $a+d=2$ and $\left(\begin{array}{c}\frac{b}{2}\\\frac{d-1}{2}\end{array}\right)$ belongs to  $\Z\gcd(A-I)$. 
\end{enumerate}
\item The proof in the case $A\equiv I \text{ mod } 2$ uses the same computations as previous cases.
\end{enumerate}
\end{proof}

\subsection{Application for 3-manifolds having surgery
 presentations by diagonal linking matrices}
In this part, let us consider the triples $(M, \tau, \R^n)$, and $N=M/\tau =S^3(\mathcal{L})$ i.e $N$ is obtained from $S^3$ by integral surgery on $\mathcal{L}$. Suppose that in this case the linking matrix $B_\mathcal{L}$ of $N$ is a $n\times n$ diagonal matrix of the form
\begin{equation*}
A=M(f)=
\left(
  \begin{array}{ccccccccc}
    a_1 & 0 & \ldots  & \ldots & &0 &0 & \ldots & 0\\
    0& \ddots && & &  \vdots &\vdots  & 0 & \vdots \\
  \vdots   &   & a_\nu & & & \vdots & \vdots &   & \vdots  \\
\vdots &  & & b_1  & &  \vdots& \vdots &   & \vdots  \\
\vdots &  &  & &\ddots & 0 &  &  &  \\
0& \ldots  & &\ldots  & &b_\mu & 0 & \ldots  & 0  \\
    0 & \ldots &  &  \ldots & & 0& 0 & \ldots & 0\\
    \vdots &  &  & & &  & \vdots & 0& \vdots  \\
    0 & \ldots &  &  \ldots& &  0&  0 & \ldots& 0\\
  \end{array}
\right)
\end{equation*}
where  $a_i, b_i$ in the first block are respectively even and odd integers, and all the others coefficients are zero.
In this case, we have $H^1(N_{\mathcal{L}}, \Z_2)=(\Z_2)^\nu \oplus (\Z_2)^m$, with $m=n-(\nu+v)$.

Then the classifying class is the form $x=x'+x''$ where $x' \in(\Z_2)^\nu$ and $x'' \in (\Z_2)^m$.  
\begin{proposition}
Under the above hypothesis, we have
\begin{enumerate}
\item $ind_{\Z_2}(M, \tau) \geq 2$ if and only if $x' \neq 0$.
\item $ind_{\Z_2}(M, \tau) =3$ if and only if   $\displaystyle \sum_{{1\leq i\leq \nu}\atop{x'_i\neq 0}} a_i  $ is not divisible by $4$.
\end{enumerate}
\label{diag}
\end{proposition}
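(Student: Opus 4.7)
The plan is to specialize Theorem \ref{th8} to the diagonal linking matrix $B_\mathcal{L}$: both the Bockstein formula $i^*(\tfrac12 B_\mathcal{L} X)$ and the quadratic form $\tfrac12 \transp{X} B_\mathcal{L} X$ reduce to diagonal sums that can be read off immediately.

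First I would fix identifications. The mod-$2$ reduction $\bar B_\mathcal{L}$ has diagonal entries $0$ in positions $1,\dots,\nu$ (the even $a_i$), $1$ in positions $\nu+1,\dots,\nu+u$ (the odd $b_j$), and $0$ in positions $\nu+u+1,\dots,n$ (the $A_m$-block). Hence $H^1(N_\mathcal{L},\Z_2) \simeq \ker(\bar B_\mathcal{L}) = (\Z_2)^\nu \oplus (\Z_2)^m$, which recovers the given splitting $x = x' + x''$. Dually, $H^2(N_\mathcal{L},\Z) \simeq \mathrm{coker}(B_\mathcal{L}) = \bigoplus_{i=1}^\nu \Z/a_i \,\oplus\, \bigoplus_{j=1}^u \Z/b_j \,\oplus\, \Z^m$, with $i^*$ acting as coordinatewise reduction.

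For statement (1), I apply Theorem \ref{th8}(1). The middle $u$ coordinates of $X$ are zero by construction, and the last $m$ diagonal entries of $B_\mathcal{L}$ vanish; consequently $B_\mathcal{L} X$ has coordinates $a_i \bar x'_i$ in the first $\nu$ positions and $0$ elsewhere. Since each $a_i$ is a nonzero even integer, dividing by $2$ in $\Z^n$ is legitimate, and applying $i^*$ yields
$$i^*\bigl(\tfrac12 B_\mathcal{L} X\bigr) = \bigl(\tfrac{a_i}{2}\,\bar x'_i \bmod a_i\bigr)_{i=1}^\nu \ \in\ \bigoplus_{i=1}^\nu \Z/a_i\ .$$
Because $\tfrac{a_i}{2}$ is a nonzero class in $\Z/a_i$, this vanishes if and only if every $\bar x'_i = 0$, i.e.\ $x' = 0$. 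Combining with Theorem \ref{th8}(1) gives $ind_{\Z_2}(M,\tau) \geq 2 \Longleftrightarrow x' \neq 0$.

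For statement (2), I apply Theorem \ref{th8}(2). The same two observations, together with $\bar x'_i \in \{0,1\}$, give
$$\transp{X} B_\mathcal{L} X \ =\ \sum_{i=1}^\nu a_i\, (\bar x'_i)^2 \ =\ \sum_{i=1}^\nu a_i \bar x'_i\ .$$
Therefore $\tfrac12 \transp{X} B_\mathcal{L} X \not\equiv 0 \pmod 2$ is exactly the condition that $\sum_{i=1}^\nu a_i \bar x'_i$ is not divisible by $4$, as claimed. The computations are routine; the only care needed is to verify that the odd block $b_j$ and the zero block $A_m$ contribute nothing to either formula (the former because $X$ has zero entries there, the latter because $B_\mathcal{L}$ does). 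No serious obstacle is anticipated.
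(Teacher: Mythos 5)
Your proposal is correct and follows essentially the same route as the paper: specialize Theorem \ref{th8} to the diagonal matrix, observe that the odd and zero blocks contribute nothing, and read off $\beta(x)=i^*(\tfrac12 B_\mathcal{L}X')$ and $\tfrac12\transp{X}B_\mathcal{L}X=\tfrac12\sum_i a_i\bar x'_i$. You are in fact slightly more careful than the paper at one point, namely in justifying that $i^*(\tfrac12 B_\mathcal{L}X')$ vanishes only when $X'=0$ by identifying the cokernel as $\bigoplus_i\Z/a_i\oplus\bigoplus_j\Z/b_j\oplus\Z^m$ and noting $\tfrac{a_i}{2}\not\equiv 0 \bmod a_i$ for $a_i$ nonzero even, a step the paper asserts without comment.
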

\begin{proof} 

Denote by $X\in \Z^n$ (resp. $X'$,$X''$) the lifts of $x$ (resp. $x'$, $x''$) whose components are in $\{0,1\}$. 
We will use the criterions in Theorem \ref{th8}.
 We have $B_{\mathcal{L}}X=B_{\mathcal{L}}X'$.
The non zero components of $\frac{1}{2}B_{\mathcal{L}}X$ are $\frac{a_i}{2}$ for each non zero $x'_i$.
It vanishes if $x'=0$, and 
if $x'\neq 0$ then $\frac{1}{2}B_{\mathcal{L}}X$ does not belongs to the image of $B_{\mathcal{L}}$. This proves
 the first statement. 

The second statement follows from the computation
$$ \frac{1}{2} \transp X B_{\mathcal{L}} X =  \frac{1}{2} \transp X B_{\mathcal{L}} X = \frac{1}{2}\sum_{{1\leq i\leq \nu}\atop{x'_i\neq 0}} a_i \ .   $$
\end{proof}
\begin{remark}
 Proposition \ref{diag} can be used in more general cases. Indeed, let $(M, \tau)$ be a $\Z_2-$space, where $M$ is a compact connected oriented 3-manifold, $N=M/\tau$ is such that $N= N_\mathcal{L}$ where $\mathcal{L}$ is a framed link. Using Corollary 2.5 in \cite{Otsuki}, there exists lens spaces $L(p_i, 1)$, $i=\overline{1, \nu}$ with $|p_i|\leq | H_1(N, \Z) |$, such that the manifold $N'= N \# L(p_1, 1) \# L(p_2, 1) \# ......\# L(p_\nu, 1)$ can be obtained by integral surgery along some algebraically split framed link $\mathcal{L'}$. 
This allows to compute  the index $ind_{\Z_2}(M, \tau)$, by using $N'$ with characteristic class extended by zero on the lens spaces.
\end{remark}
\subsection{The Borsuk-Ulam theorem for $S^1\times S^2$}
In this section we will discuss all free involutions on $S^1\times S^2$.
All the applications given above deal with the cases where the orbit space $N$ is a compact connected oriented 3-manifold.
Here we will also get unoriented orbit spaces.  The following theorem, proved by Y.Tao in 1962, describes all free involutions.
\begin{theorem}\cite {Tao}
If $T$ is a fixed point free involution of $S^1\times S^2$, then the orbit space $N=M/T$ is  homeomorphic either to (1) $S^1\times S^2$ or (2) 3-dimensional Klein bottle (we denote it by $K^3 )$, or (3) $S^1\times \R P^2$, or (4) $\R P^3 \# \R P^3.$
\end{theorem}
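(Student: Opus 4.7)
My plan is to pin down the fundamental group of the orbit $N=(S^1\times S^2)/T$ and then identify the manifold geometrically via sphere-theoretic decomposition. Since $T$ is a free involution on a closed 3-manifold, $N$ is a closed 3-manifold and the projection is a regular double cover, giving the short exact sequence
\[
1 \longrightarrow \Z \longrightarrow \pi_1(N) \longrightarrow \Z_2 \longrightarrow 1,
\]
so $\pi_1(N)$ contains $\Z=\pi_1(S^1\times S^2)$ as an index-2 normal subgroup. Such extensions are classified by the conjugation action $\Z_2 \to \mathrm{Aut}(\Z)=\Z_2$ (trivial or inversion) together with a class in $H^2(\Z_2,\Z^{\pm})$; since $H^2(\Z_2,\Z)=\Z_2$ while $H^2(\Z_2,\Z^-)=0$, there are exactly three such groups, namely $\Z$, $\Z\times\Z_2$, and the infinite dihedral group $D_\infty = \Z_2*\Z_2$.

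I would then identify $N$ from $\pi_1(N)$ using the sphere theorem and Kneser-Milnor prime decomposition, exploiting that $\pi_2(N)\cong\pi_2(S^1\times S^2)\neq 0$ so $N$ carries an embedded essential 2-sphere. If $\pi_1(N)=\Z$, free indecomposability forces this sphere to be non-separating (a separating essential sphere would split $\pi_1$ as a non-trivial free product); cutting along it yields a simply-connected compact 3-manifold with boundary $S^2\sqcup S^2$, namely $S^2\times I$, and regluing by an orientation-preserving or orientation-reversing homeomorphism of $S^2$ recovers $S^1\times S^2$ or $K^3$, sorted by orientability of $N$. If $\pi_1(N)=D_\infty$, the non-trivial free product decomposition yields $N=P_1\#P_2$ with $\pi_1(P_i)=\Z_2$, and the classification of closed prime 3-manifolds with fundamental group $\Z_2$ forces $P_i=\R P^3$. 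If $\pi_1(N)=\Z\times\Z_2$, this group has non-trivial center and is therefore freely indecomposable, so $N$ is prime; the absence of essential embedded spheres (again from freeness analysis on $\pi_1$) allows Epstein's projective plane theorem to extract a two-sided $\R P^2\subset N$, and the resulting $\R P^2$-bundle structure over $S^1$ is trivial since $\mathrm{Diff}(\R P^2)$ is connected, yielding $N=S^1\times\R P^2$.

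To close the argument I would exhibit explicit free involutions realizing each case, for instance $(\theta,x)\mapsto(\theta+\pi,x)$, $(\theta,-x)$, $(\theta+\pi,-x)$, and $(-\theta,-x)$, yielding $S^1\times S^2$, $S^1\times\R P^2$, $K^3$, and $\R P^3\#\R P^3$ respectively, confirming that each of the four candidates actually arises and hence the list is complete and sharp.

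The principal obstacle I anticipate is the case $\pi_1(N)=\Z\times\Z_2$: identifying $N$ exactly as the product (rather than as some a priori more exotic prime 3-manifold containing an essential $\R P^2$) requires ruling out essential embedded spheres, applying the projective plane theorem, and then leveraging the connectedness of $\mathrm{Diff}(\R P^2)$ to trivialise the ensuing $\R P^2$-bundle over $S^1$. A secondary subtlety is that the $\Z$ case implicitly invokes the Poincar\'e-type statement that the only simply-connected compact 3-manifold with boundary $S^2\sqcup S^2$ is $S^2\times I$; in 1962 this required a bespoke argument, but in this low-complexity setting one can circumvent the full Poincar\'e conjecture by working directly on the universal cover $\R\times S^2$ of $N$ and analyzing free $\Z$-actions there.
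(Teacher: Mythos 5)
First, note that the paper does not prove this statement at all: it is quoted verbatim from Tao's 1962 paper \cite{Tao}, so there is no internal proof to compare against. Your outline is therefore necessarily an independent route, and its architecture is the standard modern one: classify $\pi_1(N)$ as an extension $1\to\Z\to\pi_1(N)\to\Z_2\to 1$ (your computation giving exactly $\Z$, $\Z\times\Z_2$ and $D_\infty$ is correct, since $H^2(\Z_2,\Z)=\Z_2$ and $H^2(\Z_2,\Z^-)=0$), then identify $N$ using $\pi_2(N)\neq 0$, the sphere/projective-plane theorems, prime decomposition, Kneser's conjecture (Stallings), Livesay's theorem on free involutions of $S^3$, and the Poincar\'e conjecture. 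Your four explicit involutions do realize the four quotients. This is legitimate today, but be aware it is anachronistic relative to Tao's hands-on equivariant argument, and it is not self-contained: the $\Z$ case and the $D_\infty$ case each silently invoke Poincar\'e (to recognize the capped-off piece as $S^3$, and to know a closed manifold with $\pi_1=\Z_2$ is covered by $S^3$) plus Livesay for $P_i=\R P^3$.

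The genuine gap is the $\Z\times\Z_2$ case, which you correctly flag as the principal obstacle but do not actually close. Epstein's theorem hands you a two-sided $\pi_1$-injective $\R P^2\subset N$, but there is no ``resulting $\R P^2$-bundle structure over $S^1$'' at that point: you must first show (i) the $\R P^2$ is non-separating (this follows from an Euler characteristic count, since a separating two-sided $\R P^2$ would force $\chi(P_1)+\chi(P_2)=\chi(\R P^2)=1$ with $\chi(P_i)=\tfrac12$); (ii) the cut-open manifold $N'$ has $\pi_1(N')=\Z_2$, which needs a Bass--Serre argument: $\Z\times\Z_2$ has infinite center, so it is neither a nontrivial amalgam nor an HNN extension over $\Z_2$ with base strictly larger than $\Z_2$; and (iii) $N'$ is homeomorphic to $\R P^2\times I$. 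Step (iii) is the crux: $N'$ is a compact manifold with $\pi_1=\Z_2$ whose double cover is a simply connected manifold bounded by two spheres, i.e.\ $S^2\times I$, so what you need is precisely the classification of free involutions of $S^2\times I$ preserving each boundary component --- a statement of the same nature and difficulty as the theorem you are proving, and the part where Tao (and Livesay) did the real geometric work. Only after (iii) does the connectedness of $\mathrm{Diff}(\R P^2)$ trivialize the bundle. As written, your proposal assumes the bundle structure rather than deriving it, so this case is not proved.
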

Using the theorem above, we can state the following.
\begin{theorem}
Let  $\tau$ be a free $\Z_2$-action on $S^1\times S^2$, and let $N$ be the orbit space of this action. We have the following cases:
\begin{enumerate}
\item If $N$ is $S^1\times S^2$, then $ind_{\Z_2}(S^1\times S^2, \tau)=1.$
\item If $N$ is the 3-dimensional Klein bottle $K^3$, then $ind_{\Z_2}(S^1\times S^2, \tau)=1.$
\item If $N$ is $S^1\times \R P^2$, then $ind_{\Z_2}(S^1\times S^2, \tau)=2.$
\item If $N$ is $\R P^3 \# \R P^3$, then $ind_{\Z_2}(S^1\times S^2, \tau)=2.$
\end{enumerate}
\label{th 21}
\end{theorem}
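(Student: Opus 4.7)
My plan is to apply Theorem \ref{th7} to each of the four orbit spaces: in each case it suffices to decide whether $\beta(x)=0$ (which characterises $ind_{\Z_2}=1$) and whether $x^3\neq 0$ (which characterises $ind_{\Z_2}=3$). Cases (2) and (3) yield non-orientable quotients, so the linking-form criterion of Theorem \ref{th2} is unavailable; I would instead work with Theorem \ref{th7} directly, and appeal to Theorem \ref{th_reformulate3} only where convenient.

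Cases (1) and (2) can be handled uniformly: both $S^1\times S^2$ and the $3$-dimensional Klein bottle $K^3$ fibre over $S^1$ with fibre $S^2$, so the long exact homotopy sequence gives $\pi_1(N)=\Z$. Consequently the reduction $H^1(N,\Z)\to H^1(N,\Z_2)$ is surjective, the classifying class $x$ is integral, and $\beta(x)=0$; Theorem \ref{th7} then yields $ind_{\Z_2}=1$. The only geometric point to check here is the identification of $K^3$ with the mapping torus of the antipodal map on $S^2$, whose canonical connected double cover is $S^1\times S^2$. For case (3), $\pi_1(S^1\times\R P^2)=\Z\oplus\Z_2$, and the $S^1\times S^2$ cover is classified by the projection onto the $\Z_2$ factor, so $x=\pi_2^*\alpha$ where $\alpha\in H^1(\R P^2,\Z_2)$ is the generator. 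Naturality combined with $\beta(\alpha)\neq 0$ yields $\beta(x)\neq 0$, so $ind_{\Z_2}\geq 2$; on the other hand $x^3=\pi_2^*(\alpha^3)=0$ because $H^3(\R P^2,\Z_2)=0$, giving $ind_{\Z_2}=2$.

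Case (4) is the most delicate and is where the surgery machinery of Section 3 is needed. The manifold $\R P^3\#\R P^3$ is presented by surgery on a split two-component link of unknots each framed by $\pm 2$, so $B_\mathcal{L}=\operatorname{diag}(\pm 2,\pm 2)$ and $H^1(N,\Z_2)=\Z_2\oplus\Z_2$. Proposition \ref{diag} applied to the three nontrivial classes $(1,0),(0,1),(1,1)$ gives $\sum a_i\bar x'_i$ equal to $\pm 2,\pm 2$ and $0\pmod 4$ respectively, hence the first two classes yield $ind_{\Z_2}=3$ while the diagonal yields $ind_{\Z_2}=2$. The main obstacle, and the only step that requires real care, is to identify which of these classes actually classifies the specific cover $S^1\times S^2\to\R P^3\#\R P^3$, since the wrong choice would erroneously produce $ind_{\Z_2}=3$. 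I would resolve this at the level of fundamental groups: since $\pi_1(N)=\Z_2*\Z_2$ is the infinite dihedral group, among its three index-$2$ subgroups only the kernel of the diagonal homomorphism (sending both generators to $1\in\Z_2$) is infinite cyclic, namely generated by the product $ab$ of the two involutions; this matches $\pi_1(S^1\times S^2)=\Z$, pins down $x=(1,1)$, and completes the proof.
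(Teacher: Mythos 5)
Your proposal is correct and reaches the same conclusions as the paper, but several of your justifications are genuinely different and in places more structural. For cases (1) and (2) the paper computes $H^2(N,\Z)$ explicitly ($\Z$ for $S^1\times S^2$, $0$ for $K^3$, the latter via Mayer--Vietoris) and concludes that $\beta$ must vanish for torsion reasons; you instead observe that both quotients fibre over $S^1$ with fibre $S^2$, so $\pi_1(N)=\Z$, the class $x$ lifts to $H^1(N,\Z)$, and $\beta(x)=0$ by the Bockstein exact sequence (equivalently, criterion 3 of Theorem \ref{th5}). This is cleaner and avoids the Mayer--Vietoris computation. Case (3) is essentially the paper's argument, with naturality under $\pi_2^*$ replacing the explicit cross-product manipulation; both hinge on the same identification $x=(0,1)$. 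In case (4) you and the paper use the same surgery presentation with linking matrix $\operatorname{diag}(\pm2,\pm2)$, but you add a genuinely useful step the paper omits: the paper simply asserts that the cover $S^1\times S^2\to\R P^3\#\R P^3$ is classified by $(1,1)$, whereas you justify it by noting that among the three index-$2$ subgroups of $\Z_2*\Z_2$ only the kernel of the diagonal surjection is infinite cyclic. That is exactly the point where an error would be easy to make (the other two classes give index $3$), so this is a worthwhile addition.

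One small incompleteness in case (4): the congruence $\sum a_i\bar x'_i\equiv 0\pmod 4$ only rules out $ind_{\Z_2}=3$ via Proposition \ref{diag}(2); to conclude $ind_{\Z_2}=2$ rather than $1$ you must also invoke Proposition \ref{diag}(1), i.e.\ note that both framings are even so $x'=x=(1,1)\neq 0$ and hence $\beta(x)\neq 0$ (the paper does this by checking $(1,1)\notin \mathrm{Im}\,B_\mathcal{L}$). This is a one-line fix, not a flaw in the approach.
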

\begin{proof}

\noindent \textbf{The first case : $N=S^1\times S^2$}\\
Here after calculations of the cohomology groups of the 3-manifold $S^1\times S^2$, we obtain $H^1(S^1\times S^2, \Z_2)\simeq \Z_2$ and $H^2(S^1\times S^2, \Z)\simeq \Z$, so the Bockstein homomorphism $\beta \in Hom(\Z_2, \Z)$ vanishes. Hence, $ind_{\Z_2}(S^1\times S^2, \tau)=1.$

\noindent\textbf{The second case : $N=K^3.$}\\
Here, $K^3 = [0, 1]\times S^2/(1, x)\sim(0, -x)$. We take $$A=[0, \frac{1}{2}]\times S^2, B=[\frac{1}{2}, 1]\times S^2 \: \: and \: \: A\cap B=\{\frac{1}{2}\}\times S^2 \sqcup \{0\}\times S^2.$$ Using the long exact sequence of Mayer-Vietoris and the universal coefficients theorem,  we obtain $H^1(K^3, \Z_2)\simeq \Z_2$ and $H^2(K^3, \Z) \simeq 0$. Therefore, the Bockstein homomorphism $\beta : H^1(K^3, \Z_2) \longrightarrow H^2(K^3, \Z)$ is the zero homomorphism, and in this case we have $ind_{\Z_2}(S^1\times S^2, \tau)=1.$

\noindent\textbf{The third case : $N=S^1\times \R P^2.$}\\
Using the K$\overset{..}u$nneth formula, we obtain $H^1(S^1\times \R P^2, \Z_2)\simeq \Z_2 \oplus \Z_2.$\\
The first factor of this direct sum is generated by $u_1\times v_0$, the image of $u_1 \otimes v_0$ by the cross product $\times$, where $u_1, v_0$ are respectively the generators of $H^1(S^1, \Z_2)\simeq \Z_2$  and $H^0(\R P^2, \Z_2)\simeq \Z_2.$ Since, the ring $\Z_2$ has an identity element and $v_0$ is only one no zero class in $H^0(\R P^2, \Z_2)$, it corresponds at the identity element of the cup product defined by the 0 cycle taking the value 1 on each singular 0 simplex. Denote it by  $\textbf{1}$. Hence, $u_1 \times\textbf{1} $ is the generator of the first factor of the direct sum in $H^1(S^1\times \R P^2, \Z_2)$. \\Using a similar argument, we have $ \textbf{1}\times v_1 $ is the generator of the second factor of the last direct sum, where $\textbf{1} \in H^0(S^1, \Z_2) \simeq \Z_2$ and $v_1\in H^1(\R P^2, \Z_2)\simeq \Z_2$ are the generators of these modules.\\
The classifying class $x\in H^1( S^1\times \R P^2, \Z_2)$ in this case will take the values $(0, 1)$ in $\Z_2 \oplus \Z_2$ since its associated covering space is $S^1\times S^2$. So using the properties of the cup product and the cross product, we have $$\beta_2(x)=x\smallsmile x=(0+\textbf{1}\times v_1)\smallsmile(0+\textbf{1}\times v_1) = (\textbf{1}\times v_1)\smallsmile (\textbf{1}\times v_1)=\textbf{1}\times v^2_1.$$  Since $v_1 $ is the generator of $H^1(\R P^2, \Z_2)$, we know that  $v^2_1$ is also the generator of $ H^2(\R P^2, \Z_2)$ , then $v^2_1 \neq 0$ and $\beta_2(x) \neq 0$. Thus, $ind_{\Z_2}(S^1\times S^2, \tau)\geq 2.$\\
No we calculate the triple cup, $$x^3= \beta_2 (x)\smallsmile x = (\textbf{1}\times v^2_1 )\smallsmile (\textbf{1}\times v_1) = \textbf{1}\times v^3_1 = 0,$$ because of dimension reasons. Then, in this case we have $ind_{\Z_2}(S^1\times S^2, \tau)=2$.\\
\textbf{The forth case : $N=\R P^3 \# \R P^3.$}\\
Here we are in the case of oriented manifolds.
 We can use a surgery presentation, namely a trivial link with $2$ components and framings $2$. 
 The linking matrix is the matrix $$B_\mathcal{L}=\left(
  \begin{array}{cc}
    2 & 0 \\
    0 & 2 \\
  \end{array}
\right).$$
 There are $3$ non equivalent connected double covers of $\R P^3\# \R P^3$. Those with classifying class $(1,0)$ and $(0,1)$ are homeomorphic to $\R P^3\#\R P^3$, the remaining one is
$S^1\times S^2$ with classifying class $(1,1)$.
From the discussion of diagonal surgery presentation in previous section, we get $ind_{\Z_2}(S^1\times S^2, \tau)= 2.$
\end{proof}

\subsection{Application to the 3-Klein bottle $K^3$}
The study of the previous application allows us to remark that there is a double covering  $p: K^3 \longrightarrow S^1\times \R P^2$ which associates to each class $ [(t, x)]\in K^3$, the class $[(t, [x]]\in S^1\times \R P^2$. The corresponding free $\Z_2-$action on $K^3$ associated to this covering is $[t,x]\rightarrow [t,-x]$. We have seen that $H^1(S^1\times \R P^2, \Z_2)\simeq \Z_2 \oplus \Z_2$ which implies that there are $3$ non equivalent connected double covers of $S^1\times \R P^2$ : 
$S^1\times S^2$ with classifying class $(0,1)$, $S^1\times \R P^2$ with classifying class $(1,0)$ and $K^3$ with classifying class $(1,1)$.

\begin{proposition}
Let $(K,\tau)$ be the $3$-dimensional Klein bottle \mbox{$K= [0, 1]\times S^2/(1, x)\sim(0, -x)$} with involution $\tau: [t,x]\rightarrow [t,-x]$, then we have $ind_{\Z_2}( K^3, \tau) =3.$
\label{prop 22}
\end{proposition}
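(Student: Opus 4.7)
The plan is to apply part~3 of Theorem~\ref{th7}: it suffices to exhibit $x^3\neq 0$ in $H^3(N,\Z_2)$, where $N=K/\tau$ is the orbit space and $x\in H^1(N,\Z_2)$ is the classifying class of the double cover $K\twoheadrightarrow N$. First I would check that $\tau$ is well-defined and free (the only nontrivial identification in $K$ is across the endpoints $t=0,1$, which precludes $[t,x]=[t,-x]$) and identify the orbit space: modding out the $S^2$ factor by the antipodal map yields $\R P^2$, while the boundary relation $(1,[x])\sim(0,[-x])=(0,[x])$ collapses $[0,1]$ to $S^1$, so $N=S^1\times\R P^2$. From the discussion preceding the proposition, the classifying class is $x=(1,1)\in H^1(S^1\times\R P^2,\Z_2)\simeq\Z_2\oplus\Z_2$.

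Next I would compute $x^3$ directly via the Künneth isomorphism. Writing $u_1\in H^1(S^1,\Z_2)$ and $v_1\in H^1(\R P^2,\Z_2)$ for the generators, we have $x=u_1\times\textbf{1}+\textbf{1}\times v_1$. Since $H^2(S^1,\Z_2)=0$ one has $u_1\smallsmile u_1=0$, and the two cross terms in $x\smallsmile x$ sum to $2(u_1\times v_1)=0$ mod~$2$; hence $x^2=\textbf{1}\times v_1^2$. Cupping once more,
$$x^3=(u_1\times\textbf{1}+\textbf{1}\times v_1)\smallsmile(\textbf{1}\times v_1^2)=u_1\times v_1^2+\textbf{1}\times v_1^3.$$
The second summand vanishes since $H^3(\R P^2,\Z_2)=0$, while $u_1\times v_1^2$ is a generator of $H^3(S^1\times\R P^2,\Z_2)\simeq\Z_2$ because $v_1^2$ generates $H^2(\R P^2,\Z_2)$.

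Therefore $x^3\neq 0$, and part~3 of Theorem~\ref{th7} yields $ind_{\Z_2}(K^3,\tau)=3$. There is no serious obstacle beyond identifying the orbit space $N$ and the classifying class $x$ correctly; note the contrast with case~3 of Theorem~\ref{th 21}, where $N=S^1\times\R P^2$ again, but the classifying class was $(0,1)$, giving $x=\textbf{1}\times v_1$ and $x^3=\textbf{1}\times v_1^3=0$. The additional summand $u_1\times\textbf{1}$ present here is precisely what produces the nonzero top class $u_1\times v_1^2$ and pushes the $\Z_2$-index up to $3$.
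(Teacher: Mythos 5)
Your proof is correct and follows essentially the same route as the paper: identify the orbit space as $S^1\times\R P^2$ with classifying class $x=u_1\times\textbf{1}+\textbf{1}\times v_1$, compute $x^2=\textbf{1}\times v_1^2$ and then $x^3=u_1\times v_1^2\neq 0$ via the K\"unneth formula, and conclude by part 3 of Theorem \ref{th7}. The paper phrases the square as $\beta_2(x)$ using Lemma \ref{lemma 7}, but the computation is identical, and your extra remarks (freeness of $\tau$, identification of $N$, contrast with the $(0,1)$ cover) only make the argument more self-contained.
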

\begin{proof}

First as in the previous subsection,
 $x=u_1 \times\textbf{1} + \textbf{1}\times v_1$. We calculate 
  \begin{eqnarray*}
\beta_2(x)&=& (u_1 \times\textbf{1} +\textbf{1}\times v_1) \smallsmile (u_1 \times \textbf{1}+\textbf{1}\times v_1)\\
&= &(u_1^2 \times\textbf{1})+ (u_1 \smallsmile \textbf{1}) \times (\textbf{1} \smallsmile v_1 ) - (\textbf{1}\smallsmile u_1)\times (v_1 \smallsmile\textbf{1})+(\textbf{1}\times v_1^2)\\
&= &u_1^2 \times\textbf{1}+ u_1\times v_1 - u_1\times v_1 +\textbf{1}\times v_1^2\\
& =&\textbf{1}\times v_1^2,
\end{eqnarray*}
because $u_1^2=0 \in H^2(S^1, \Z_2) \simeq 0.$
Since, $v_1^2 \in H^2(\R P^2, \Z_2)$ is no zero, so $\beta_2(x) \neq 0$ and $ind_{\Z_2}( K^3, \tau) \geq 2.$\\
For the triple cup, we have
\begin{eqnarray*}
x^3&=&\beta_2(x) \smallsmile x\\
&=&(\textbf{1}\times v_1^2 )\smallsmile (u_1 \times\textbf{1} + \textbf{1}\times v_1)\\
&=&(\textbf{1} \smallsmile u_1)\times (v_1^2 \smallsmile  \textbf{1}) + (\textbf{1}\times v_1^3)\\
&=&u_1 \times v_1^2 \neq 0
\end{eqnarray*}
also since $v_1^3=0 .$
\end{proof}

\end{document}